\newtheorem{theorem}{Theorem}[section]
\newtheorem{proposition}[theorem]{Proposition}
\newtheorem{lemma}[theorem]{Lemma}
\newtheorem{corollary}[theorem]{Corollary}
\theoremstyle{definition}
\renewcommand*\env@matrix[1][\arraystretch]{%
  \edef\arraystretch{#1}%
  \hskip -\arraycolsep
  \let\@ifnextchar\new@ifnextchar
  \array{*\c@MaxMatrixCols c}}
\numberwithin{equation}{section}
\definecolor{darkred}{rgb}{0.8,0,0}
\begin{document}
\pagenumbering{arabic}


\newcommand{\R}{\mathbb{R}}
\newcommand{\stochle}{\le_{\mathrm{st}}}
\newcommand{\hazardle}{\le_{\mathrm{hr}}}
\newcommand{\N}{\mathbb{N}}
\newcommand{\Z}{\mathbb{Z}}
\newcommand{\C}{\mathbb{C}}
\newcommand{\ind}[1]{\mathds{1}_{#1}}
\newcommand{\esp}[1]{\mathbb{E}\left[ #1 \right]}
\newcommand{\var}[1]{\mathbb{V}ar\left[ #1 \right]}
\newcommand{\prob}[1]{\mathbb{P}\left( #1 \right)}
\newcommand{\cqfd}{\hfill $\Box$}
\newcommand{\eps}{\varepsilon}
\newcommand{\red}[1]{\textcolor{red}{#1}}
\newcommand{\blue}[1]{\textcolor{blue}{#1}}
\newcommand{\green}[1]{\textcolor{olive}{#1}}


\newcommand{\Sr}{\mathcal{S}}
\newcommand{\Er}{\mathcal{E}}

\newcommand{\Psitet}{\Psi_{\theta}}
\newcommand{\Phitet}{\Phi_{\theta}}
\newcommand{\Utet}{U_{\theta}}


\begin{center}
{\Large \bf Ruin problems for risk processes with dependent phase-type claims}

\vspace{6mm}
{{\bf Oscar Peralta}\footnote {Cornell University, School of Operations Research and Information Engineering,
Rhodes Hall, 
Ithaca 14850, New York, 
United States. E-mail: op65@cornell.edu}, {\bf Matthieu Simon}
\footnote {D\'epartement de Math\'ematique, Place du Parc 20 , B-7000 Mons, Belgium. {\it E-mail}: matthieu.simon@umons.ac.be. Previously at Universitat de Barcelona, Departament de Matem\`atica Econ\`omica, Financera i Actuarial, 690 Avinguda Diagonal, Barcelona E-08034, Spain.}}
\end{center}

\vspace{1mm}

\begin{abstract}
We consider continuous time risk processes in which the claim sizes are dependent and non-identically distributed phase-type distributions. 
The class of distributions we propose is easy to characterize and allows to incorporate the dependence between claims in a simple and intuitive way. It is also designed to facilitate the study of the risk processes by using a Markov-modulated fluid embedding technique. Using this technique, we obtain simple recursive procedures to determine the joint distribution of the time of ruin, the deficit at ruin and the number of claims before the ruin. We also obtain some bounds for the ultimate ruin probability. Finally, we provide a few examples of multivariate phase-type distributions and use them for numerical illustration.
\end{abstract}

\vspace{3mm}
\noindent
{\it AMS Subject Classifications:} 91B30, 91B70, 60J28. \\
{\it Key Words:} Risk processes; risk of ruin; dependent claims;  multivariate phase-type distributions; Markov-modulated fluid flows.

\vspace{3mm}


\section{Introduction} \label{A_intro}

A risk process is a stochastic processes $\{R(t)\,|\, t \ge 0\}$ of the form
\begin{equation}\label{Eq_reserve}
R(t)= u + ct - \sum_{k=1}^{N(t)}Y_k,
\end{equation}
where $u\ge 0$, $c>0$, $\{N(t) \,|\,t\ge 0\}$ is a counting process and $\{Y_k\}_{k\ge 1}$ is a family of nonnegative random variables. Such processes are commonly used in risk theory to represent the level of reserves of an insurance company that collects premiums at continuous rate $c$ and reimburses the claims $Y_k$ that occur according to $\{N(t)\}$. A fundamental problem in this context is to determine the  probability that the reserves becomes negative in finite time, and, when it happens, after how long.

The most classical risk process was introduced by Cram\'er \cite{Cramer} and is called the Cram\'er-Lundberg process. It assumes that the variables $\{Y_k\}_{k\ge 1}$ are i.i.d. and that $\{N(t)\}$ is a Poisson process, independent of the claim sizes. Since then, this model was generalised in various ways: we refer to Asmussen and Albrecher \cite{AsmussenRuin1} for an introduction to the fundamental risk model and its major extensions.

Most of the results in the literature concern risk processes in which the claim sizes are independent and identically distributed. However, there are many circumstances where models with dependent claims appear to be more appropriate. Indeed, large groups of people are often subject to common risks stemming from economic, environmental or epidemiological factors, for instance. Moreover, claims that are not identically distributed can be relevant in various situations, for example when the reaction capacity of the threatened population can evolve over time.
Different risk models with dependent claims have already been considered in the literature; see e.g.\ Albrecher \textit{et al.} \cite{Albrecher1}, Constantinescu \textit{et al.} \cite{Constantinescu}, Bladt \textit{et al.} \cite{Oscar} and references therein. 

In this paper, we consider a risk process of the form \eqref{Eq_reserve} in which the claim sizes are random vectors taken from a family of multivariate phase-type distributions. The random vectors that we consider have several advantages: 
firstly, their components are neither independent nor identically distributed in general, and dependence between components can be incorporated in a simple and intuitive way.
Secondly, they are easy to characterise and admit simple and explicit expressions for the joint density and  the correlation matrix.
Finally, they are designed to facilitate the analysis of the corresponding risk process through the study of an appropriate embedded Markov-modulated fluid flow.

We use this embedding method to derive an explicit formula for a transform of the ruin time $T$, the severity at ruin $-R(T)$ and the number of claims until ruin $N(T)$ in terms of some first passage matrices related to the embedded fluid flow. We present a recursive procedure to compute this transform numerically. We also briefly explain how the approach can be easily extended to the analysis of risk processes in a random environment. Next, we obtain simple bounds for the ultimate ruin probability of our model, in terms of the ruin probability for risk processes with independent Erlang claims. These bounds are quite improvable, but they allow to check whether the ultimate ruin is almost sure or not in a variety of situations.
Finally, we conclude with some numerical illustrations where we compare the ruin probabilities for different risk models with multivariate phase-type distributed claims.

The paper is organised as follows. 
In Section \ref{A_PH}, we first briefly review the main properties of univariate phase-type distributions. Next, we introduce our class of multivariate phase-type distribution and provide a few examples.
In Section \ref{A_continuous}, we turn to the analysis of the risk process. We first detail the construction of the embedded Markov-modulated fluid flow and use it to determine a transform of  $T$,  $-R(T)$ and  $N(T)$. We then present our bounds for the ultimate ruin probability.
Finally, in Section \ref{AAA_illustrations}, we present a few numerical illustrations.

\section{Dependent phase-type distributions} \label{A_PH}

\subsection{Univariate phase-type distributions}\label{subsec:uniPH}

Throughout the text, $\bm{0}$ denotes a vector of zeros and $\bm{1}$ denotes a vector of ones, with appropriate size and orientation. 

Let $\{\varphi(t)\,|\,t\ge 0\}$ be a time-homogeneous Markov jump process defined on a state space $\{\star\}\cup \Sr$, where $\Sr$ contains $p<\infty$ transient states and $\star$ is an absorbing state. The generator of $\varphi$ is then of the form
\begin{equation*}
\Lambda =
\begin{bmatrix}
\, 0 \!\!\! & \vline & & \bm{0} & & \\ 
\hline
& \vline & & & & \\ 
\, \bm{a} \!\!\! & \vline & & A & & \\ 
& \vline & & & &
\end{bmatrix},
\end{equation*}
where $A$ is a  $p \times p$ matrix containing the transition rates between the transient states and $\bm{a}=-A\bm{1}$ is the vector containing the transition rates from the transient states to the absorbing state. The initial probability vector $\bm{\alpha}$ of $\{\varphi(t)\}$ on $\Sr$,  with components $\alpha_i = \prob{\varphi(0) = i}$ for $i \in \Sr$, is assumed to satisfy $\bm{\alpha}\bm{1} = 1$.
We say that a random variable $Y$ has a phase-type distribution of size $p$ with initial distribution $\bm{\alpha}$ and sub-generator matrix $A$, and write $Y\sim \mathrm{PH}(\bm{\alpha},A)$, if $Y$ is distributed as the time before absorption in  $\{\varphi(t)\}$:
\begin{equation*}
Y \stackrel{d}{=} \inf\{t \ge 0 \,|\, \varphi(t) = \star\}.
\end{equation*}

Phase-type distributions have been popularized by the work of Neuts \cite{NeutsPH, Neuts}. Since then, they have been used in many application fields. One of their advantages is that they characterisation is easy and intuitive: if $Y \sim \mathrm{PH}(\bm{\alpha},A)$, then its density function is
\begin{equation}
f_Y(t) =  \bm{\alpha} e^{At} \bm{a}   \label{densitePHdistr}
\end{equation}
for $t \ge 0$, and the moments of $Y$ are given by
\begin{equation}
\esp{Y^k} = k! \, \bm{\alpha} (-A)^{-k} \bm{1}, ~~\quad k \in \N, \label{PHexpect}
\end{equation}
see e.g.\ Latouche and Ramaswami \cite[Chapter 2]{MAM} or Neuts \cite[Chapter 2]{Neuts}.
Note that the expression for the density is intuitive since the component $(e^{At})_{ij}$ is the probability that $\varphi$ is in state $j$ (and therefore not absorbed yet) at time $t$ if it started from state $i$, while the vector $\bm{a} \, dt$ contains the probabilities of absorption on the infinitesimal time interval $[t \,, \, t+dt]$ (conditional on the process not being absorbed by time $t$).  Also, the formula $\esp{Y} = \bm{\alpha} (-A)^{-1} \bm{1}$ is straightforward since $(-A^{-1})_{ij}$ is the average time spent in state $j \in \Sr$ before absorption given that the starting state is $i \in \Sr$.

Phase-type distributions benefit from various interesting closure properties. For instance, they are stable under convolutions: 
if $Y_1 \sim \mathrm{PH}(\bm{\alpha}_1,A_1)$ and $Y_2 \sim \mathrm{PH}(\bm{\alpha}_2,A_2)$ are independent, then $Y_1 + Y_2 \sim \mathrm{PH}(\bm{\beta}_1,B_1)$ where
\begin{equation*}
B_1 = \begin{bmatrix}[1.2]
A_1 & \vline & \bm{a}_1 \bm{\alpha}_2 \\
\hline
0 & \vline & A_2
\end{bmatrix}\quad\mathrm{with}\quad\bm{a}_1=-A_1\bm{1},
\end{equation*}
and
$\bm{\beta}_1 = \left[\bm{\alpha}_1 ~\, \bm{0}\right]$ (see \cite[Section 2.6]{MAM}). 
Moreover, if $\delta$ is a Bernoulli random variable independent from $Y_1$ and $Y_2$ with parameter $p$, then $\delta Y_1+(1-\delta)Y_2\sim\mathrm{PH}(\bm{\beta}_2,B_2)$ where
\begin{equation*}
B_2 = \begin{bmatrix}[1.2]
A_1 & \vline & 0 \\
\hline
0 & \vline & A_2
\end{bmatrix}\quad\mathrm{and}\quad\bm{\beta}_2 = \left[p\bm{\alpha}_1 ~~ (1-p)\bm{\alpha}_2\right].
\end{equation*} 
 
Finally, let us mention that the class of phase-type distributions is dense (in the weak convergence sense) within the class of distributions with support on $[0,\infty)$ (see e.g.\ Breuer and Baum \cite[Theorem 9.14]{BookBreuer}). Together with the previous properties, this makes phase-type distributions a very powerful tool to model and analyse a wide range of random phenomena.

\subsection{Multivariate phase-type distributions} \label{BBB_mult_PH}

Different classes of multivariate phase-type distributions have been considered in the literature. The first one was introduced by Assaf \textit{et al.}\ \cite{Assaf} who proposed the following definition: let $\varphi$ be a Markov jump process as in Section \ref{subsec:uniPH} and consider a collection of subsets $\{\mathcal{S}_i\}_{1\le i\le n}$ such that $\mathcal{S}_i\subset\{\star\}\cup\mathcal{S}$, $\mathcal{S}_i$ is stochastically closed for all $i$ (i.e.\ if the process enters $\mathcal{S}_i$, it never leaves it) and $\bigcap^n_{i=0}\mathcal{S}_i = \{\star\}$. A random vector $\bm{Y}^{(n)} = (Y_1,Y_2,\dots,Y_n)$ is said to follow a multivariate phase-type ($\mathrm{MPH}$) distribution if 
\[(Y_1, Y_2,\dots, Y_n )\stackrel{d}{=}(\nu_1,\nu_2,\dots,\nu_n)\quad\mathrm{where}\quad\nu_i=\inf\{t \ge 0 \,|\, \varphi(t) \in \mathcal{S}_i\}.\]
The authors derived various properties of these distributions, including a closed expression for their joint density. 

The class of ($\mathrm{MPH}$) distribution was then extended to the family of $\mathrm{MPH}^*$ distributions by Kulkarni \cite{Kulkarni}. A random vector $\bm{Y}^{(n)}$ is said to follow a $\mathrm{MPH}^*$ distribution if there exists a collection of non-negative numbers $\{r(h,i): h\in\mathcal{S}, 1\le i\le n\}$ such that 
\[(Y_1, Y_2,\dots, Y_n )\stackrel{d}{=}(\nu_1^*,\nu_2^*,\dots,\nu_n^*)\quad\mathrm{where}\quad\nu_i^*=\int_0^\infty r(\varphi(s),i)\,ds\quad\mathrm{and}\quad r(\star, i)=0.\] Unlike the $\mathrm{MPH}$ distributions, there is no closed-form formula for the joint density of $\bm{Y}^{(n)}\sim\mathrm{MPH}^*$, so its analysis is, in most cases, limited to its Laplace transform. This class was further extended in Bladt and Nielsen \cite{bladt2010multivariate} who considered those random vectors $\bm{Y}^{(n)}$ such that $\sum_{i=1}^n c_iY_i$ follows an univariate phase-type distribution for any $c_i\ge 0$.

\vspace{1em}

In this paper, we introduce another class of multivariate phase-type distributions which is  suitable for our analysis of risk processes. For such processes, the claim sizes are determined sequentially over time, that is, they are sampled one after the other. This motivates the following definition.

Let $n \in \N$ and $\{\varphi(t)\,|\,t\ge 0\}$ be a Markov jump process defined on the state space 
$$\{\star\} \cup \Sr_1 \cup \Sr_2 \cup \cdots \cup \Sr_n,$$
 where each $\Sr_i$ is some finite subset of transient states and $\star$ is an absorbing state. Assume that its generator is of the following form when written according to the state decomposition above:
\begin{equation}\label{gen_G}
\Lambda = 
\begin{bmatrix}
 0  & \vline & & && \bm{0} & & \\ 
\hline
\bm{0} & \vline & \! \! A_1 & D_1 & 0 & 0 & \cdots & 0 & 0 \\
\bm{0} & \vline & 0& A_2 & D_2 & 0 & \cdots & 0 & 0 \\
\bm{0} & \vline & 0&0& A_3 & D_3 & \cdots & 0 & 0 \\
\vdots & \vline & \vdots &\vdots &\vdots & \vdots  & \ddots & \vdots  & \vdots \\
\bm{0} & \vline & 0&0&0& 0 & \cdots & A_{n-1} & D_{n-1} \\
D_n\bm{1} & \vline & 0&0&0& 0 & \cdots & 0 & A_n 
\end{bmatrix},
\end{equation} 
and the initial state is given by the probability vector $\bm{\pi} = [0\,|\,\bm{\alpha}~\bm{0}~\cdots~\bm{0}]$, so that the process starts from the subset $\Sr_1$. Here, $(A_k)_{ij}$ is the transition rate from $i\in \Sr_k$ to $j\in \Sr_k$, $j \neq i$. For $k<n$, the matrix $D_k$ contain the transition rates from $\Sr_k$ to $\Sr_{k+1}$. Finally, the vector $D_n\bm{1}$ contains the transitions from $\Sr_n$ to the absorbing state $\star$.

From now on, we say that the vector $\bm{Y}^{(n)} = (Y_1,Y_2,...,Y_n)$ follows a multivariate phase-type distribution if $Y_k$ is the amount of time spent by $\{\varphi(t)\}$ in the subspace $\Sr_k$ before absorption:
\begin{equation}\label{eq:defYk1}
Y_k=\int_0^\infty \ind{\varphi(s)\in \Sr_k}\,ds.
\end{equation}
Note that each component $Y_k$ is almost surely finite since each subset $\Sr_k$ is assumed to be transient.
Our class of multivariate phase-type distributions is clearly a subset of the class $\mathrm{MPH}^*$ introduced by Kulkarni \cite{Kulkarni}. The case $n=2$ was analysed in Bladt \textit{et al.} \cite[Theorem 6.10]{Oscar}, where the authors showed that the vector $\bm{\alpha}$ and the matrices $A_1$, $D_1$, $A_2$ and $D_2$ can be chosen in such a way that $Y_1$ and $Y_2$ are phase-type-distributed with any feasible goal covariance.

From the structure \eqref{gen_G} of the generator $\Lambda$, it is clear that the components $Y_k$ are determined sequentially in $\varphi$: the process starts in $\Sr_1$ and $Y_1$ is known as soon as it leaves $\Sr_1$ for $\Sr_2$. Then, $Y_2$ is known as soon as the process leaves $\Sr_2$ for $\Sr_3$, and so on.
This is the key feature that will allow us to apply the fluid embedding technique in the next section, where we consider risk processes with multivariate phase-type claims.

\vspace{1em}

Let us first have a look at the distribution of $\bm{Y}^{(n)}$. From (\ref{eq:defYk1}) and the structure \eqref{gen_G} of $\Lambda$, it is easy to see that $$Y_k \sim \mathrm{PH}(\bm{\gamma}_k,A_k)$$ 
where $\bm{\gamma}_1=\bm{\alpha}$ and for $k\ge 2$,
\begin{equation} \label{gammak}
\bm{\gamma}_k = \bm{\alpha}(-A_1)^{-1}D_1(-A_2)^{-1}D_2\cdots (-A_{k-1})^{-1}D_{k-1}.
\end{equation}
This follows from the fact that $[(-A_\ell^{-1})D_\ell]_{ij}$ is the probability that the process $\varphi$ is in phase $j$ when entering $\Sr_{\ell+1}$ given $\varphi$ entered $\Sr_{\ell}$ in phase $i$. So, the marginal density and the moments of $Y_k$ are obtained from \eqref{densitePHdistr} and \eqref{PHexpect}. The components of $\bm{Y}^{(n)}$ can be dependent since the state occupied by $\varphi$ when entering a subset $\Sr_k$ depends on the trajectories of $ \varphi$ in $\Sr_1 \cup \cdots \cup \Sr_{k-1}$. The joint distribution of $\bm{Y}^{(n)} = (Y_1,Y_2,...,Y_n)$ is given below, {and is a straightforward extension of the corresponding results for absorbing Markov arrival processes (see e.g.\ Latouche and Ramaswami \cite{MAM}).}
\begin{proposition}\label{prop:density1}
The density function of $\bm{Y}^{(n)}$ is given by
\begin{equation}
f(y_1,y_2,...,y_n) = \bm{\alpha}e^{A_1y_1}D_1e^{A_2y_2}D_2\cdots e^{A_ny_n}D_n \bm{1}. \label{joint_density}
\end{equation}
for $y_i \ge 0, ~i=1,2,...,n$.
\end{proposition}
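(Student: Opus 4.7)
The plan is to derive the joint density by tracking the trajectory of $\varphi$ across the sequence of subsets $\mathcal{S}_1,\mathcal{S}_2,\ldots,\mathcal{S}_n$ and summing over all possible intermediate phases. Because the generator \eqref{gen_G} is block upper bidiagonal, the process necessarily visits $\mathcal{S}_1$, then $\mathcal{S}_2$, $\ldots$, then $\mathcal{S}_n$, then absorbs into $\star$; so the sojourn times $Y_k$ defined in \eqref{eq:defYk1} are exactly the times between consecutive such transitions.

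The first step is to condition on the sequence of entry phases $i_1 \in \mathcal{S}_1, i_2 \in \mathcal{S}_2, \ldots, i_n \in \mathcal{S}_n$ at which $\varphi$ enters each block, with $i_1$ being the initial phase. By the Markov property, the infinitesimal probability that $\varphi$ starts in phase $i_1$, spends a time in $[y_1, y_1 + dy_1]$ in $\mathcal{S}_1$ before jumping into phase $i_2\in \mathcal{S}_2$, spends a time in $[y_2, y_2 + dy_2]$ in $\mathcal{S}_2$ before jumping into phase $i_3 \in \mathcal{S}_3$, $\ldots$, and finally jumps from $\mathcal{S}_n$ to $\star$ after a time in $[y_n, y_n + dy_n]$, is
\begin{equation*}
\alpha_{i_1}\,(e^{A_1 y_1} D_1)_{i_1 i_2}\,(e^{A_2 y_2} D_2)_{i_2 i_3}\, \cdots\, (e^{A_{n-1} y_{n-1}} D_{n-1})_{i_{n-1} i_n}\,(e^{A_n y_n} D_n \bm{1})_{i_n}\,dy_1 \cdots dy_n.
\end{equation*}
Indeed, $(e^{A_k y_k})_{i_k j_k}$ is the probability that $\varphi$ is in phase $j_k \in \mathcal{S}_k$ at time $y_k$ after entering $\mathcal{S}_k$ in phase $i_k$ (having never left $\mathcal{S}_k$), $(D_k)_{j_k i_{k+1}}\, dy_k$ is the infinitesimal probability of jumping from $j_k$ to $i_{k+1} \in \mathcal{S}_{k+1}$ during the subsequent $dy_k$ window, and summing over the intermediate $j_k$ yields the entries of $e^{A_k y_k} D_k$; likewise $(D_n \bm{1})_{j_n}\, dy_n$ is the infinitesimal absorption probability from $j_n \in \mathcal{S}_n$.

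The second step is to sum over $i_1, i_2, \ldots, i_n$: the result is precisely the matrix product
\begin{equation*}
\bm{\alpha}\, e^{A_1 y_1} D_1\, e^{A_2 y_2} D_2 \cdots e^{A_{n-1} y_{n-1}} D_{n-1}\, e^{A_n y_n} D_n \bm{1},
\end{equation*}
which establishes \eqref{joint_density}.

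The only subtle point is the careful handling of the infinitesimal transition probabilities, which is a standard calculation already used implicitly in the derivation of \eqref{densitePHdistr}; one may alternatively proceed by induction on $n$, using the case $n=1$ (the univariate phase-type density) as the base and applying the strong Markov property at the first entry time into $\mathcal{S}_2$ for the inductive step, where the post-$\mathcal{S}_1$ process is again multivariate phase-type with initial distribution supported on $\mathcal{S}_2$. I do not anticipate a genuine obstacle beyond bookkeeping.
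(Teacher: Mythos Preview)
Your proposal is correct. Both your argument and the paper's rest on the same structural observation: by the block upper-bidiagonal form of $\Lambda$, the process visits $\mathcal{S}_1,\ldots,\mathcal{S}_n$ in order, and the strong Markov property at each entry time factorises the joint law into the product of the block terms $e^{A_ky_k}D_k$.

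The presentations differ slightly. You give a direct sample-path argument, writing down the infinitesimal probability of a full trajectory $(i_1,\ldots,i_n)$ of entry phases and summing to recover the matrix product; this is heuristic in the use of $dy_k$ but standard, and you correctly flag that a rigorous version proceeds by induction on $n$. The paper takes precisely that inductive route as its primary argument: it assumes \eqref{joint_density} for $n-1$, conditions on $(Y_1,\ldots,Y_{n-1})$ and the entry phase $\varphi(\tau_n)$ into $\mathcal{S}_n$, integrates to obtain the joint c.d.f., and differentiates. Your direct decomposition is more transparent about why the formula has the shape it does; the paper's inductive c.d.f.\ computation is more self-contained as a formal proof. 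Since you already sketch the inductive alternative at the end, there is nothing missing.
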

\begin{proof}
The density is easily obtained by induction on $n$: assuming the form \eqref{joint_density} for $n-1$ components and denoting by $\tau_n$ the first time $\varphi$ is in $\Sr_n$,
\begin{align*}
&\prob{Y_1\le y_1,...,Y_n\le y_n} \\
&~~= \sum_{j \in \Sr_n} \int_0^{y_1} \cdots \int_0^{y_{n-1}} \prob{Y_1\le y_1,...,Y_n\le y_n \,|\, Y_1 = x_1,...Y_{n-1}=x_{n-1}, \varphi(\tau_n)=j} \\
&\hspace{21em} .\, d\prob{Y_1 \le x_1,...Y_{n-1} \le x_{n-1}, \varphi(\tau_n)=j} \\
&~~= \sum_{j \in \Sr_n} \int_0^{y_1} \cdots \int_0^{y_{n-1}} \prob{Y_n\le y_n \,|\, \varphi(\tau_n)=j} \left(\bm{\alpha}e^{A_1x_1}D_1 \cdots e^{A_{n-1}x_{n-1}}D_{n-1} \right)_j \, dx_1 \cdots dx_{n-1} \\
&~~=\int_0^{y_1} \cdots \int_0^{y_{n}} \bm{\alpha}e^{A_1x_1}D_1 \cdots e^{A_{n-1}x_{n-1}}D_{n-1} e^{A_{n}x_{n}}D_{n} \bm{1} \, dx_1 \cdots dx_{n}.
\end{align*}
Differentiating with respect to $y_1,\dots, y_n$ in the last equality yields \eqref{joint_density}.
\end{proof}

A closed expression for the covariances between the components of $\bm{Y}^{(n)}$ can also be easily obtained:
\begin{proposition} \label{prop_cov}
For $1\le k < \ell \le n$,
\begin{equation}\label{cov}
\begin{aligned}
Cov(Y_k,Y_\ell) &= \bm{\gamma}_k (-A_k)^{-2}D_k (-A_{k+1})^{-1}D_{k+1}\cdots (-A_{\ell-1})^{-1}D_{\ell-1} (-A_{\ell})^{-1}\bm{1}\\
&~~- \left(\bm{\gamma}_k (-A_{k})^{-1} \bm{1}\right)\,\left(\bm{\gamma}_\ell (-A_{\ell})^{-1} \bm{1}\right),
\end{aligned}
\end{equation}
where the vectors $\bm{\gamma}_k$ are given in \eqref{gammak}.
\end{proposition}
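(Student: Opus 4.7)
The plan is to evaluate $\esp{Y_k Y_\ell}$ directly from the joint density \eqref{joint_density} supplied by Proposition \ref{prop:density1}, and then subtract $\esp{Y_k}\esp{Y_\ell}$, whose factors $\bm{\gamma}_k(-A_k)^{-1}\bm{1}$ and $\bm{\gamma}_\ell(-A_\ell)^{-1}\bm{1}$ follow from \eqref{PHexpect} applied to the marginal phase-type distributions identified just before the proposition.

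First I would record two elementary matrix integrals, valid for any sub-generator $A_m$ (whose spectrum lies in the open left half-plane): $\int_0^\infty e^{A_m y}\,dy = (-A_m)^{-1}$ and $\int_0^\infty y\,e^{A_m y}\,dy = (-A_m)^{-2}$, the second obtained from the first by a single integration by parts. Multiplying \eqref{joint_density} by $y_k y_\ell$ and integrating the variables $y_m$ one after another via Fubini, each factor $e^{A_m y_m}$ with $m\notin\{k,\ell\}$ is replaced by $(-A_m)^{-1}$, while the two weighted factors contribute $(-A_k)^{-2}$ and $(-A_\ell)^{-2}$. This yields
\begin{equation*}
\esp{Y_k Y_\ell} = \bm{\alpha}(-A_1)^{-1}D_1\cdots D_{k-1}(-A_k)^{-2}D_k\cdots D_{\ell-1}(-A_\ell)^{-2}D_\ell(-A_{\ell+1})^{-1}D_{\ell+1}\cdots (-A_n)^{-1}D_n\bm{1}.
\end{equation*}

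The final step is to collapse the trailing factors by means of the identity $(-A_m)^{-1}D_m\bm{1}=\bm{1}$ for every $m=1,\dots,n$. This identity is immediate from the structure of $\Lambda$: each of its rows sums to zero, and $\Sr_m$ only communicates with $\Sr_{m+1}$ (for $m<n$) or with $\{\star\}$ (for $m=n$), so $A_m\bm{1}+D_m\bm{1}=\bm{0}$. Applied iteratively from the right, it reduces $(-A_{\ell+1})^{-1}D_{\ell+1}\cdots(-A_n)^{-1}D_n\bm{1}$ to $\bm{1}$, after which $(-A_\ell)^{-2}D_\ell\bm{1}=(-A_\ell)^{-1}\bm{1}$. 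Recognising the remaining left-hand factor $\bm{\alpha}(-A_1)^{-1}D_1\cdots(-A_{k-1})^{-1}D_{k-1}$ as $\bm{\gamma}_k$ via \eqref{gammak} produces exactly the first term of \eqref{cov}; subtracting $\esp{Y_k}\esp{Y_\ell}$ finishes the proof. I do not anticipate a substantive obstacle: the only small verifications are the second matrix integral and the telescoping identity, both elementary. As a sanity check, a probabilistic route — condition on the entry phase $\varphi(\tau_{k+1})$ into $\Sr_{k+1}$, use the Markov property to decouple $Y_k$ and $Y_\ell$, and evaluate $\esp{Y_k\ind{\varphi(\tau_{k+1})=j}}=[\bm{\gamma}_k(-A_k)^{-2}D_k]_j$ using the standard joint moment of absorption time and exit phase — produces the same expression.
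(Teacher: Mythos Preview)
Your proof is correct and follows essentially the same route as the paper: both compute $\esp{Y_kY_\ell}$ from the joint law and then collapse the trailing factors using $D_m\bm{1}=-A_m\bm{1}$. The only cosmetic difference is that the paper passes through the Laplace transform \eqref{Laplace_tf} and differentiates in $\theta_k,\theta_\ell$ at $\bm{0}$, whereas you integrate the density \eqref{joint_density} against $y_ky_\ell$ directly --- these are equivalent calculations yielding the identical intermediate expression before simplification.
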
 
\begin{proof}
The covariance between $Y_k$ and $Y_\ell$ is given by $Cov(Y_k,Y_\ell) = \esp{Y_kY_\ell} - \esp{Y_k} \esp{Y_\ell}$,
and $\esp{Y_k}=\bm{\gamma}_k (-A_{k})^{-1} \bm{1}$, $\esp{Y_\ell}=\bm{\gamma}_l (-A_{\ell})^{-1} \bm{1}$ are known from \eqref{PHexpect}. To obtain $\esp{Y_kY_\ell}$, we start from the Laplace transform of $\bm{Y}^{(n)}$, which is easily derived by integration in \eqref{joint_density}. For a $n$-dimensional vector $\bm{\theta}$ of nonnegative components,
\begin{equation}
\esp{e^{-\langle \bm{\theta},\bm{Y}^{(n)}\rangle}} = \bm{\alpha}(\theta_1I-A_1)^{-1}D_1(\theta_2I - A_2)^{-1}D_2\cdots (\theta_nI-A_{n})^{-1}D_{n}\bm{1}. \label{Laplace_tf}
\end{equation}
Using that 
$$
\frac{d}{d\theta}(\theta I-M)^{-1} = - (\theta I-M)^{-2}
$$
for any square matrix $M$ such that $\theta I-M$ is invertible, we obtain
\begin{align*}
\esp{Y_kY_\ell} &= \left((d/d\theta_k)(d/d\theta_\ell) \esp{e^{-\langle \bm{\theta},\bm{Y}^{(n)}\rangle}}\right)\bigg|_{\bm{\theta}=\bm{0}} \\
&= \bm{\gamma}_k (-A_k)^{-2}D_k (-A_{k+1})^{-1}D_{k+1}\cdots (-A_{\ell-1})^{-1}D_{\ell-1} (-A_{\ell})^{-2} D_\ell (-A_{\ell+1})^{-1}D_{\ell+1} \\
& \hspace{5em} \cdots (-A_{n-1})^{-1}D_{n-1}(-A_{n})^{-1}D_{n}\bm{1}.
\end{align*}
It suffices to use that $D_i \bm{1} = -A_i\bm{1}$ for all $\ell\le i\le n$ to obtain the announced expression from the last equality.
\end{proof}

We now present some examples of vectors $\bm{Y}^{(n)}$ following a multivariate phase-type distribution. They will be used later for illustration in the setting of risk processes. 

\vspace{1em}

\noindent {\bf Example 1.} Let $\bm{Y}^{(n)}$ be a vector of $n$ independent  random variables $Y_k \sim \mathrm{PH}(\bm{\alpha}_k,A_k)$. Then $\bm{Y}^{(n)}$ has a multivariate phase-type distribution of representation \eqref{gen_G} where $\bm{\alpha}  = \bm{\alpha}_1$, $D_k = (-A_k \bm{1}) \bm{\alpha}_{k+1}$ for $1 \le k\le n-1$ and $D_n \bm{1} = -A_n \bm{1}$.
\vspace{1em}

\noindent {\bf Example 2.} Let $\{U_1,...U_n\}$ and $\{V_1,...V_n\}$ be two collections of independent random variables such that $U_k \sim \mathrm{PH}(\bm{\beta},B)$ and $V_k \sim \mathrm{PH}(\bm{\gamma},G)$ for all $k \in \{1,...,n\}$. Let $r, r_k, p_k$ ($2\le k\le n$) be some constants in $[0,1]$. Define the random vector $\bm{Y}^{(n)}$ as follows: First, $Y_1 = U_1$ with probability $r$ and $Y_1 = V_1$ with the complementary probability $1-r$.
Next, the value of $Y_k$, $k=2,3,...,n$ is chosen sequentially according to the value of $Y_{k-1}$: if $Y_{k-1}=U_{k-1}$, then
$$
Y_k = \left\{
\begin{array}{ll}
U_k &\textrm{with probability } r_k, \\
V_k &\textrm{with probability } 1-r_k.
\end{array}
\right.
$$
If $Y_{k-1}=V_{k-1}$, then
$$
Y_k = \left\{
\begin{array}{ll}
U_k &\textrm{with probability } 1 - p_k, \\
V_k &\textrm{with probability } p_k.
\end{array}
\right.
$$
The vector $\bm{Y}^{(n)}$ has a multivariate phase-type distribution with parameters
\[A_k \equiv A = \begin{bmatrix}[1.2]
B & 0 \\
0  & G
\end{bmatrix}, \quad ~ D_k=\begin{bmatrix}[1.2]
r_k \bm{b}\bm{\beta} & (1-r_k)\bm{b}\bm{\gamma} \\
(1-p_k) \bm{g}\bm{\beta} & p_k \bm{g}\bm{\gamma}
\end{bmatrix},  \quad ~ \bm{\alpha}=[r\bm{\beta} ~\, (1-r)\bm{\gamma}],\]
where $\bm{b}=-B\bm{1}$ and $\bm{g}=-G\bm{1}$.

\vspace{1em}

\noindent {\bf Example 3.} Fix $m>1$ and consider the random vector $\bm{Y}^{(n)}$ of  representation \eqref{gen_G} with the $m \times m$ matrices $A_k$ and $D_k$ such that $\forall k \ge 1$,
\begin{equation*}
A_k = \begin{bmatrix}
-\mu_k & \mu_k p_k & 0 & \cdots & 0 \\
0 & -\mu_k & \mu_k p_k  & \cdots & 0 \\
0 & 0 & -\mu_k  & \cdots & 0 \\
\vdots & \vdots & \vdots & \ddots & \vdots \\
0 & 0& 0  & \cdots & -\mu_k \\
\end{bmatrix}, ~~
D_k =
\begin{bmatrix}
& & & & \vline & \\
& & \mu_k (1-p_k) P & & \vline & \!\! \bm{0} \\
& & & & \vline &  \\
\hline
& & \mu_k \bm{\beta}_k & & \vline & \!\!  0
\end{bmatrix},
\end{equation*}
where $P$ is an $(m-1) \times (m-1)$ stochastic matrix, $\bm{\beta}_k$ is a  probability vector with $m-1$ components, $\mu_k$ is a positive rate and  $0 < p_k <1$. The initial probability vector $\bm{\alpha}$ is arbitrary.
Here, the states in $A_k$ can be seen as $m$ successive stages $1,2,...,m$ of duration Exp($\mu_k$) each. If the process $\varphi$ enters $\Sr_k$ in the ${\ell}$-th stage, then $Y_k$ is the sum of at most $m-\ell+1$ i.i.d variables Exp($\mu_k$).   The dependences between the components of $\bm{Y}^{(n)}$ comes from the fact that the initial stage visited in $\Sr_{k+1}$ depends on the last stage visited by $Y_k$ through the transition matrix $P$ and the vector $\bm{\beta}_k$.


\section{Risk process with multivariate phase-type claims} \label{A_continuous}

In this section, we consider the risk process $\{R(t)\,|\, t\ge 0\}$ given by
\begin{equation}\label{risk_proc_MPH}
R(t)= u + ct - \sum_{k=1}^{N(t)}Y_k,
\end{equation}
where $\{N(t)\,|\, t\ge 0\}$ is a Poisson process of intensity $\lambda>0$ and, for all $n \ge 1$, the vector $(Y_1, Y_2, \dots, Y_n)$ is independent of $\{N(t)\}$ and follows a multivariate phase-type distribution with representation (\ref{gen_G}). The parameters $u,c >0$ correspond respectively to the initial level of reserves and the premium rate. We are interested in the distribution of three statistics related to this model: the time of ruin 
\begin{equation}
T = \inf\{t \ge 0 \,|\, R(t)<0\},  \label{def_Truine}
\end{equation}
the deficit at ruin $-R(T)$ and the number $N(T)$ of claims that occurred up to time $T$. Our aim is to determine their joint distribution through the transform
\begin{equation} \label{def_tf_ruine}
\esp{e^{-\theta T}\ind{T<\infty,\, N(T)\le s,\, -R(T)\ge y}\,|\, R(0)=u},
\end{equation}
for $\theta \ge 0$, $s \in \N_0$ and $y \ge 0$.

We are going to determine \eqref{def_tf_ruine} through the study of a Markov-modulated fluid flow closely related to the risk process \eqref{risk_proc_MPH}. This method is sometimes called fluid embedding and has already been used in the setting of risk theory (see e.g.\ Badescu and Landriault \cite{Badescu} for an overview). Compared to existing models, a difference here is that we need to consider an infinite phase space for the embedded fluid flow. The construction of the fluid model and its links with the risk process are detailed in Section \ref{BBB_MMFF}. In Section \ref{BBB_tf}, we use this fluid approach to derive an expression for \eqref{def_tf_ruine}.

\subsection{The embedded Markov-modulated fluid flow} \label{BBB_MMFF}

A Markov-modulated fluid flow (MMFF) is a stochastic process $\{(X(t), \phi(t)) \,|\, t \in \R^+ \}$ where $X$ is called the level and $\phi$ is called the phase. The dynamics are the following: $\{\phi(t)\}$ is a Markov jump process on a state space $\Er$ characterized by its generator $Q$. To each phase $i$ in $\Er$ one associates a rate $c_i \neq 0$. The process $\{X(t)\}$ takes its values in $\R$, has continuous trajectories and is such that
\begin{equation*}
\frac{d}{dt}X(t) = c_{ \phi(t)}.
\end{equation*}
In other words, the level process evolves in a piecewise linear fashion, at rate $c_i$ when $ \phi(t) = i$. It is convenient to reorganize the phase space and partition $\Er$ into two subspaces $\Er_+ =\{ i \in \Er \,|\, c_i > 0 \}$ and $\Er_- = \{ i \in \Er \,|\,  c_i < 0 \}$. Denoting by $C$ the diagonal matrix of rates, we can write $Q$ and $C$ according to this subdivision $\Er = \Er_+ \cup \Er_-$: 
\begin{equation*}
Q =
\begin{bmatrix}
Q_{++} & Q_{+-} \\
Q_{-+} & Q_{--} 
\end{bmatrix} ,~~~~~
C =
\begin{bmatrix}
C_{+} & \\
 & C_{-} 
\end{bmatrix}.
\end{equation*}

To the risk process $\{R(t)\}$ in \eqref{risk_proc_MPH}, we associate the MMFF $\{(X(t),\phi(t))\}$ defined on the infinite phase space $\Er = \Er_+ \cup \Er_-$  where $\Er_+$ and $\Er_-$ are two copies of $\Sr_1 \cup \Sr_2 \cup \cdots$. To differentiate between these sets, we write 
$$\Er_+=\Sr_1^+ \cup \Sr_2^+ \cup \cdots ~~\text{and}~~\Er_-=\Sr_1^- \cup \Sr_2^- \cup \cdots$$
 where $\Sr_k^+$ and $\Sr_k^-$ are two copies of $\Sr_k$. In the sequel, the matrices $Q_{++}$, $Q_{+-}$, $Q_{-+}$ and $Q_{--}$ have the following form according to this state partition:
\begin{equation} \label{Q_MMFF}
\begin{aligned}
& Q_{++} =  \begin{bmatrix}
-\lambda I & 0 & 0 &  \cdots   \\
0& -\lambda I & 0 & \cdots  \\
0&0& -\lambda I &  \cdots  \\
\vdots &\vdots &\vdots & \ddots
\end{bmatrix},~~~~
Q_{+-} =  \begin{bmatrix}
\lambda I & 0 & 0 &  \cdots   \\
0& \lambda I & 0 &  \cdots  \\
0&0& \lambda I&  \cdots  \\
\vdots &\vdots &\vdots  & \ddots 
\end{bmatrix} \\
& Q_{-+} =  \begin{bmatrix}
0 & D_1 & 0 & 0 &  \cdots   \\
0 & 0& D_2 & 0 & \cdots  \\
0 & 0&0& D_3 &  \cdots  \\
\vdots &\vdots &\vdots  &\vdots &
\end{bmatrix},~~~~~~
Q_{--} =  \begin{bmatrix}
A_1 & 0 & 0 &  \cdots   \\
0& A_2 & 0 &  \cdots  \\
0&0& A_3 &  \cdots  \\
\vdots &\vdots &\vdots & \ddots 
\end{bmatrix},
\end{aligned}
\end{equation} 
and $C$ is such that $C_+ = cI_\infty$ and $C_-=-I_\infty$, where $I_\infty$  denotes the identity matrix of infinite dimension.

Built this way and when starting from $X(0)=u$ with initial phase distribution vector $\bm{\pi} = [\bm{\alpha}~~ \bm{0}~~ \bm{0}~~\cdots]$, the process $\{(X(t),\phi(t))\}$  evolves like $\{R(t)\}$ in \eqref{risk_proc_MPH} except that each jump in $\{R(t)\}$ is replaced by a linear decrease of the level $\{X(t)\}$ at the unit rate, for a duration equal to the size of the jump.
Note that the matrices $Q_{++}$ and $Q_{+-}$  need to be constituted by infinitely many blocks in order to maintain the same dependence between the sojourn times in the subspaces $\Sr_k^-$ for the MMFF than for the components of the multivariate phase-type distribution with representation \eqref{gen_G}. Let us describe the first stages of the embedded process in more detail: the MMFF starts from a phase $i \in \Sr_1^+$, chosen according to the vector  $\bm{\alpha}$, and stays in that phase for a period of time Exp($\lambda$) during which the level process increases at rate $c$. Then there is a transition to the corresponding phase in $\Sr_1^-$ and the level starts decreasing at rate $-1$. The duration of this decrease is $\mathrm{PH}(\bm{e}_i^\intercal,A_1)$ where $\bm{e}_i$ is a unitary column vector with $i$-th component equal to one. The matrix $D_1$ contains the absorption rates triggering a transition to $\Sr_2^+$, and the choice of the chosen state in $\Sr_2^+$ at that time also determines the state which will be occupied at the first passage to  $i \in \Sr_2^-$, that is, the initial phase for the phase-type duration representing the second claim.

More formally, the risk process and its associated MMFF are linked by a change of time: denoting 
\[J(t) = \int_0^t \ind{ \phi(s) \in \Er_+} \, ds\]
the time spend by the MMFF in $\Er_+$ up to time $t$ and 
$\mathcal{T}(t)=\inf\{s>0 \,|\, J(s)>t\}$, it holds that
$$\{R(t) \,|\, t\ge 0\}\stackrel{d}{=}\{X(\mathcal{T}(t))\,|\, t\ge 0\}.$$

The levels crossed by $\{R(t)\}$ on a time interval $[0,t[$ are the same as the ones crossed by $\{X(t)\}$ on the interval $[0, \mathcal{T}(t)[$. In particular, the time  of ruin $T$ defined in \eqref{def_Truine} corresponds in the MMFF to the time $J(\tau_0)$ with $\tau_0=\inf\{t>0\,|\, X(t)<0\}$, that is,
$$T\stackrel{d}{=}J(\tau_0).$$ 
Moreover, the variable $-R(T)$ corresponds in the MMFF to the level occupied at the first passage to a phase in $\Er_+$ after $\tau_0$. Finally, the variable $N(T)$ corresponds in the MMFF to the number of transitions from $\Er_+$ to $\Er_-$ up to time $\tau_0$.

\subsection{Ruin probabilities and time of ruin} \label{BBB_tf}

We can now derive an expression for the transform \eqref{def_tf_ruine}, in terms of the blocks of two first passage matrices related to the MMFF $\{(X(t),\phi(t))\}$. First, fix some $\theta\ge 0$. The first matrix $\Psitet$ is such that for all $k,\ell\ge 1$,  $i\in \Er^+_k$, $j\in \Er^-_{\ell}$, 
\begin{equation*}
\left(\Psitet\right)_{(k,i),(\ell,j)}  = \esp{e^{-\theta J(\tau_0)} \,\ind{\phi(\tau_0)=(\ell,j)} \,|\,X(0)=0, \phi(0)=(k,i)}.
\end{equation*}
The second one $\Phitet(x)$ is such that for $x \ge 0$, $i\in \Er^-_k$, $j\in \Er^-_{\ell}$,
\begin{equation*}
\left(\Phitet\right)_{(k,i),(\ell,j)}(x) = \esp{e^{-\theta J(\tau_0)} \,\ind{\phi(\tau_0)=(\ell,j)} \,|\,X(0)=x, \phi(0)=(k,i)}.
\end{equation*}
They give the Laplace transforms of the time $J(\tau_0)$ spent in $\Er_+$ before the first passage to level zero in the MMFF, starting from level zero in an ascending phase (for $\Psitet$) or from level $x$ in a descending phase (for $\Phitet(x)$). 
From the structure \eqref{Q_MMFF} of the generator $Q$ of $\{(X(t),\phi(t))\}$,
they have an upper triangular block structure when written according to the phase subdivision $\Sr_1^+ \cup \Sr_2^+ \cup \cdots$ for $\Er_+$ and $\Sr_1^- \cup \Sr_2^- \cup \cdots$ for $\Er_-$:
\begin{equation}\label{structure_PSI}
\Psitet =  \begin{bmatrix}
\Psitet(1,1) & \Psitet(1,2) & \Psitet(1,3) & \Psitet(1,4) & \cdots  \\
0& \Psitet(2,2) & \Psitet(2,3) & \Psitet(2,4) & \cdots  \\
0&0& \Psitet(3,3) & \Psitet(3,4) & \cdots  \\
0&0&0 & \Psitet(4,4) & \cdots\\
\vdots &\vdots & \vdots  & \vdots  & \ddots
\end{bmatrix},
\end{equation} 
\begin{equation}\label{structure_PHI}
\Phitet(x) =  \begin{bmatrix}
\Phitet(x;1,1) & \Phitet(x;1,2) & \Phitet(x;1,3) & \Phitet(x;1,4) & \cdots  \\
0& \Phitet(x;2,2) & \Phitet(x;2,3) & \Phitet(x;2,4) & \cdots  \\
0&0& \Phitet(x;3,3) & \Phitet(x;3,4) & \cdots  \\
0&0&0 & \Phitet(x;4,4) & \cdots\\
\vdots &\vdots & \vdots  & \vdots  & \ddots
\end{bmatrix}. 
\end{equation} 
The transform \eqref{def_tf_ruine} is easily expressed in terms of the blocks in \eqref{structure_PSI} and \eqref{structure_PHI}:
\begin{proposition} \label{prop_tf}
For any $u\ge 0$, $\theta\ge 0$, $s\ge 1$ and $y\ge 0$,
\begin{equation}\label{eq:mainresult1}
\esp{e^{-\theta T}\ind{T<\infty,\, N(T)\le s,\, -R(T)\ge y}\,|\, R(0)=u} = \sum_{k=1}^{s} \sum_{\ell=0}^{s-k} \bm{\alpha} \Psitet(1,k) \Phitet(u;k,k+\ell) e^{A_{k+\ell}y}\bm{1}.
\end{equation}
\end{proposition}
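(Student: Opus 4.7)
The plan is to translate everything into the embedded MMFF and apply the strong Markov property twice, exploiting the level-homogeneity of the fluid dynamics.

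First, by the pathwise identification at the end of Section~\ref{BBB_MMFF}, the event $\{T<\infty,\,N(T)=m\}$ corresponds to $\{\tau_0<\infty,\,\phi(\tau_0)\in\Sr_m^-\}$ and $T=J(\tau_0)$. From the block structure of $Q$ in \eqref{Q_MMFF}, once $\phi(\tau_0)=(m,j)$ the phase remains in $\Sr_m^-$ (evolving with sub-generator $A_m$) until it jumps to $\Sr_{m+1}^+$, during which interval the level decreases at rate $-1$. Thus, by the strong Markov property at $\tau_0$, the deficit $-R(T)$ is $\mathrm{PH}(\bm{e}_j^\top,A_m)$-distributed given $\phi(\tau_0)=(m,j)$ and independent of $J(\tau_0)$, so its survival function is $\bm{e}_j^\top e^{A_my}\bm{1}$. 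This reduces the target quantity to
\[
\sum_{m=1}^s\sum_{j\in\Sr_m}G_u(m,j)\,\bm{e}_j^\top e^{A_my}\bm{1},
\]
where $G_u(m,j):=\esp{e^{-\theta J(\tau_0)}\,\ind{\phi(\tau_0)=(m,j)}\mid X(0)=u,\,\phi(0)\sim\bm{\alpha}\text{ on }\Sr_1^+}$.

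Next, I express $G_u(m,j)$ in terms of $\Psitet$ and $\Phitet(u)$ by splitting the sample path at the stopping time $\sigma_u:=\inf\{t>0:X(t)=u,\,\phi(t)\in\Er_-\}$, the first return of the level to $u$ in a descending phase (necessarily in $\Er_-$ by the continuity of $X$). Since the phase process evolves independently of the level, translating levels down by $u$ shows that, under $X(0)=u$, the pair $(J(\sigma_u),\phi(\sigma_u))$ has the same joint law as $(J(\tau_0),\phi(\tau_0))$ under $X(0)=0$; the associated transform restricted to $\{\phi(\sigma_u)\in\Sr_k^-\}$ is therefore $\bm{\alpha}\Psitet(1,k)$. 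Applying the strong Markov property at $\sigma_u$ and invoking the definition of $\Phitet(u;k,m)$ for the path continuing from level $u$ in a phase of $\Sr_k^-$ yields
\[
G_u(m,j)=\sum_{k\ge 1}\bigl(\bm{\alpha}\Psitet(1,k)\,\Phitet(u;k,m)\bigr)_j,
\]
with the sum restricted to $k\le m$ by the upper block-triangularity of $\Phitet(u)$ in \eqref{structure_PHI}.

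Combining the two displays and reindexing $\ell=m-k$ converts $\sum_{m=1}^s\sum_{k=1}^m$ into $\sum_{k=1}^s\sum_{\ell=0}^{s-k}$, which gives exactly \eqref{eq:mainresult1}. The main obstacle I expect is the level-homogeneity identification for $\sigma_u$: the laws of $\tau_0$ under $X(0)=u$ and $X(0)=0$ differ, so one must argue carefully that the joint distribution of the $\Er_+$-occupation time and the terminal phase at the first downward crossing of any fixed level depends on the starting level only through a trivial shift — which is precisely what $\Psitet$ encodes. Everything else is bookkeeping via the strong Markov property.
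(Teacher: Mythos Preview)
Your proposal is correct and follows essentially the same route as the paper: both translate to the MMFF, split the path at the first downward passage through level $u$ (your $\sigma_u$ coincides with the paper's $\tau_u=\inf\{t>0:X(t)<u\}$ since $X$ is continuous and starts in $\Er_+$), invoke level-homogeneity to identify the first factor as $\bm{\alpha}\Psitet(1,k)$, and use the strong Markov property together with the definition of $\Phitet$ for the remainder. The only cosmetic difference is that you peel off the deficit factor $e^{A_my}\bm{1}$ first via the strong Markov property at $\tau_0$, whereas the paper keeps it bundled with the $\Phitet$ piece and extracts it at the end; the substance is the same.
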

\begin{proof}
Let $T_u=\inf\{t>0\,|\, R(t)<u\}$ and $\tau_u=\inf\{t>0\,|\, X(t)<u\}$. 
Using the time change given in Section \ref{BBB_MMFF} to switch from the risk process to the MMFF, we see that for fixed $k\ge 1$ and $\ell \ge 0$,
\begin{align*}
&\esp{e^{-\theta T}\ind{T<\infty,\, N(T_u)=k, N(T)=k+\ell,\, -R(T)\ge y}\,|\, R(0)=u}\\
&~ = \esp{e^{-\theta J(\tau_0)}\ind{\tau_0<\infty,\, \phi(\tau_u)\in\Sr_{k}^-,\, \phi(\tau_0)\in\Sr_{k+\ell}^-,\, \eta \ge y} \,|\, X(0)=u, \phi(0)\sim \bm{\pi}} \\
&~  = \esp{\left(e^{-\theta J(\tau_u)}\ind{\tau_u<\infty,\, \phi(\tau_u)\in\Sr_{k}^-} \right) \! \left( e^{-\theta (J(\tau_0)-J(\tau_u))}\ind{\tau_0<\infty,\, \phi(\tau_0)\in\Sr_{k+\ell}^-,\, \eta \ge y} \right) |\, X(0)=u, \phi(0)\sim \bm{\pi}},
\end{align*}
where $\eta$ is the time between $\tau_0$ and the first passage to a phase in $\Er_+$ after $\tau_0$. So, using the strong Markov property,
\begin{align*}
&\esp{e^{-\theta T}\ind{T<\infty,\, N(T_u)=k, \, N(T)=k+\ell,\, -R(T)\ge y}\,|\, R(0)=u}\\
& \quad \quad \quad \quad = \sum_{j\in \Sr_{k}^-}\esp{e^{-\theta J(\tau_u)}\ind{\tau_u<\infty,\, \phi(\tau_u)=j}\,|\, X(0)=u, \phi(0)\sim \bm{\pi}}\\
& \quad \quad \quad \quad \quad \quad \quad \quad \times\, \esp{e^{-\theta J(\tau_0)}\ind{\tau_0<\infty,\, \phi(\tau_0)\in\Sr_{k+\ell}^-,\, \eta \ge y} \, |\, X(0)=u, \phi(0)=j} \\
& \quad \quad \quad \quad = \sum_{j\in \Sr_{k}^-}\left(\bm{\alpha}\Psitet(1,k)\right)_j\left(\Phitet(u;k,k+\ell) e^{A_{k+\ell}y}\bm{1}\right)_j\\
& \quad \quad \quad \quad = \bm{\alpha}\Psitet(1,k)\Phitet(u;k,k+\ell) e^{A_{k+\ell}y}\bm{1}.
\end{align*}
Equation \eqref{eq:mainresult1} follows by summing over $\{k\ge 1, \ell \ge 0\,|\, k+\ell\le s\}$.
\end{proof}

\noindent {\bf Remark.} Taking $\theta=0$ and $y=0$ and  in \eqref{eq:mainresult1}, we obtain 
\begin{equation}
\mathbb{P}\Big(\inf_{s\ge 0} R(s)< 0,\, N(T)\le s \,|\, R(0)=u\Big) = \sum_{k=1}^{s} \sum_{\ell=0}^{s-k} \bm{\alpha} \Psi_0(1,k) \Phi_0(u;k,k+\ell) \bm{1}. \label{partTN}
\end{equation}
The probability of ultimate ruin
\begin{align*}
\mathbb{P}\Big(\inf_{s\ge 0} R(s)< 0\,|\,R(0)=u\Big)
&= \lim_{s \to \infty} \mathbb{P}\Big(\inf_{s\ge 0} R(s)< 0,\, N(T)\le s \,|\, R(0)=u\Big) 
\end{align*}
can be approximated as precisely as desired by computing \eqref{partTN} for $s$ large enough.

\vspace{1em}

In order to apply Proposition \ref{prop_tf} and compute the transform \eqref{eq:mainresult1}, we need a procedure to compute the various blocks of $\Psitet$ and $\Phitet(x)$. To this end, first note that from the definition of $\Phitet(x)$, it is easy to show (see e.g.\ Ramaswami \cite{Ramaswami}) that $\Phitet(x)$ can be expressed under exponential form
\begin{equation*}
\Phitet(x) = e^{\Utet x},
\end{equation*}
where $\Utet$ is a sub-generator with the same block structure as $\Psitet$, i.e.
\begin{equation}\label{structure_U}
\Utet =  \begin{bmatrix}
\Utet(1,1) & \Utet(1,2) & \Utet(1,3) & \Utet(1,4) & \cdots  \\
0& \Utet(2,2) & \Utet(2,3) & \Utet(2,4) & \cdots  \\
0&0& \Utet(3,3) & \Utet(3,4) & \cdots  \\
0&0&0 & \Utet(4,4) & \cdots\\
\vdots &\vdots & \vdots  & \vdots  & \ddots
\end{bmatrix}.
\end{equation}
In the next proposition, we show that the blocks $\Psitet(k,\ell)$ and $\Utet(k,\ell)$ can be obtained recursively. The notation $\delta_{k,\ell}$ is for the Kronecker delta.
\begin{proposition} \label{prop_psi_U}
The matrices $\Psitet(k,\ell)$ are given by
\begin{equation}\label{EQ_PSItet1}
\Psitet(k,k) = \frac{\lambda}{\lambda + \theta} \left(I - \frac{c}{\lambda + \theta}A_k\right)^{-1},
\end{equation}
and, for $\ell >k$,
\begin{equation}\label{EQ_PSItet2}
\Psitet(k,\ell) = \left(\frac{c}{\lambda + \theta} \sum_{v=k}^{\ell-1} \Psitet(k,v) D_v \Psitet(v+1,\ell)\right)\left(I - \frac{c}{\lambda + \theta}A_\ell\right)^{-1}.
\end{equation}
The matrices $\Utet(k,\ell)$ are given by
\begin{equation}
\Utet(k,\ell) = A_k \delta_{k,\ell} + (1-\delta_{k,\ell}) D_k  \Psitet(k+1,\ell) \label{EQ_Utet}
\end{equation}
for $1 \le k \le \ell$.
\end{proposition}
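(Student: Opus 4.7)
The plan is to first obtain \eqref{EQ_Utet} by an infinitesimal analysis of $\Phi_\theta$, and then to derive \eqref{EQ_PSItet1}--\eqref{EQ_PSItet2} by a first-step conditioning on the initial sojourn in $\mathcal{E}_+$, exploiting the fact that the blocks of $Q_{++}$ and $Q_{+-}$ in \eqref{Q_MMFF} are scalar multiples of the identity.

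For $U_\theta$, the strong Markov property and spatial homogeneity give the semigroup identity $\Phi_\theta(x+dx) = \Phi_\theta(dx)\,\Phi_\theta(x)$, so $\Phi_\theta(x) = e^{U_\theta x}$ for some $U_\theta$. To compute $U_\theta$ I would condition on the evolution during an infinitesimal descent $dx$ starting from $(x+dx,\,i)$ with $i \in \mathcal{E}_-$, using that the rate in $\mathcal{E}_-$ equals $-1$ so that time and level are in one-to-one correspondence. To first order in $dx$, the phase either does not change, changes within $\mathcal{E}_-$ according to the off-diagonals of $Q_{--}$, or makes one transition into $\mathcal{E}_+$ through $Q_{-+}$ and then returns to $\mathcal{E}_-$ at the same level according to the kernel $\Psi_\theta$. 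Crucially, $J(\tau_0)$ accumulates only the time spent in $\mathcal{E}_+$, so the sojourn in $\mathcal{E}_-$ produces no discount factor, yielding
\[
U_\theta \;=\; Q_{--} + Q_{-+}\,\Psi_\theta.
\]
Substituting the block forms of $Q_{--}$ and $Q_{-+}$ from \eqref{Q_MMFF} gives $U_\theta(k,\ell) = A_k\delta_{k,\ell} + D_k\,\Psi_\theta(k+1,\ell)$; the second term vanishes when $k=\ell$ since $\Psi_\theta(k+1,k)=0$ by the upper-triangular structure \eqref{structure_PSI}, so this is exactly \eqref{EQ_Utet}.

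For $\Psi_\theta$, starting from $X(0)=0$ in phase $(k,i)\in \mathcal{S}_k^+$, the block-diagonal form $Q_{++}=-\lambda I$, $Q_{+-}=\lambda I$ forces the first transition out of $\mathcal{E}_+$ to occur at time $T\sim \mathrm{Exp}(\lambda)$ and to land deterministically in the twin state $(k,i)\in \mathcal{S}_k^-$ at level $cT$; during $[0,T]$ the discount factor is $e^{-\theta T}$, and from $(cT,(k,i))$ the residual discounted first-passage kernel to level $0$ in $\mathcal{E}_-$ is $\Phi_\theta(cT)=e^{cU_\theta T}$. Taking expectation over $T$ gives
\[
\Psi_\theta \;=\; \lambda \int_0^\infty e^{-(\lambda+\theta)T}\, e^{cU_\theta T}\,dT \;=\; \lambda\bigl[(\lambda+\theta)I - cU_\theta\bigr]^{-1},
\]
the integral converging because $cU_\theta$ has spectrum in $\{\mathrm{Re}(z)\le 0\}$. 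Extracting the $(k,\ell)$ block of the identity $\Psi_\theta\bigl[(\lambda+\theta)I - cU_\theta\bigr]=\lambda I$ then gives \eqref{EQ_PSItet1} when $k=\ell$ and, after substituting \eqref{EQ_Utet} for $U_\theta(v,\ell)$ in the resulting sum, \eqref{EQ_PSItet2} when $k<\ell$. The main technical subtlety is that $U_\theta$ and $\Psi_\theta$ act on an infinite-dimensional phase space, but since both are block upper-triangular with the same partition, the $(k,\ell)$ block of a matrix product only involves blocks with index $v \in [k,\ell]$, so every identity can be read off block-by-block as a finite sum with no convergence issue; if desired the resolvent representation of $\Psi_\theta$ can be rigorously validated by truncating the state space at the first $N$ subsets $\mathcal{S}_k^\pm$ and letting $N\to\infty$.
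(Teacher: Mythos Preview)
Your proposal is correct and follows essentially the same approach as the paper: condition on the initial $\mathrm{Exp}(\lambda)$ sojourn in $\mathcal{E}_+$ to express $\Psi_\theta$ as an integral of $\Phi_\theta=e^{U_\theta x}$, and identify $U_\theta$ from the local dynamics in $\mathcal{E}_-$. The only cosmetic differences are that the paper evaluates the integral via integration by parts (arriving at the equivalent relation $\Psi_\theta(k,\ell)=\tfrac{\lambda}{\lambda+\theta}I\delta_{k,\ell}+\tfrac{c}{\lambda+\theta}\sum_v \Psi_\theta(k,v)U_\theta(v,\ell)$) rather than writing the resolvent $\lambda[(\lambda+\theta)I-cU_\theta]^{-1}$ directly, and derives $U_\theta$ by writing an integral equation for $\Phi_\theta(x;k,\ell)$ and differentiating at $x=0$ rather than by your infinitesimal argument.
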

\begin{proof}
We first derive an equation for $(\Psi(k,\ell))_{ij}$ ($i\in \Sr_k^+$, $j\in \Sr_{\ell}^-$). For that, we assume that the MMFF starts from $X(0)=0$ and $\phi(0)=i$. As $[Q_{++}~Q_{+-}] = [-\lambda I~ \lambda I]$, the process $\{\phi(t)\}$ stays in phase $i$ for a duration $\xi \sim \text{Exp}(\lambda)$ before going to the only phase ${i}^- \in \Sr_k^-$ available from $i$. Conditioning on $\xi$, we thus obtain 
\begin{align*}
(\Psitet(k,\ell))_{ij} &= \int_0^{\infty} \lambda e^{-\lambda y}\esp{e^{-\theta J(\tau_0)} \,\ind{\phi(\tau_0)=j} \,|\,X(0)=0, \phi(0)=i, \xi = y}\, dy \\
&= \int_0^{\infty} \lambda e^{-(\lambda + \theta) y}\esp{e^{-\theta J(\tau_0)} \,\ind{\phi(\tau_0)=j} \,|\,X(0)=cy, \phi(0)=i^-}\, dy \\
&= \lambda \int_0^{\infty} e^{-(\lambda + \theta) y} \,(\Phitet(cy;k,\ell))_{ij} \, dy.
\end{align*}
So, in matrix notation,
\begin{equation}
\Psitet(k,\ell) = \frac{\lambda}{c} \int_0^{\infty} e^{-\frac{\lambda+\theta}{c}y} \,\Phitet(y;k,\ell) \, dy, \label{A0}
\end{equation}
and integrating by parts, we obtain
\begin{align*}
\Psitet(k,\ell) & = \bigg[\frac{-\lambda}{\lambda + \theta}e^{-\frac{\lambda + \theta}{c}y} \Phitet(y;k,\ell)\bigg]_{y=0}^\infty + \frac{\lambda}{\lambda + \theta}\int_0^{\infty} e^{-\frac{\lambda + \theta}{c}y} \,\Phitet'(y;k,\ell) \, dy\\
& = \frac{\lambda}{\lambda + \theta}I \delta_{k,\ell} + \frac{\lambda}{\lambda + \theta}\int_0^{\infty} e^{-\frac{\lambda + \theta}{c}y} \,\Phitet'(y;k,\ell) \, dy.
\end{align*}
Since $\Phitet(y)=e^{\Utet y}$, we have that $\Phitet'(y) = \Phitet(y)\Utet$. Using the block structure \eqref{structure_PHI}, \eqref{structure_U}, we find that
\begin{equation*}
\Phitet'(y;k,\ell) = \sum_{v=k}^\ell \Phitet(y;k,v) \Utet(v,\ell),
\end{equation*}
and therefore
\begin{align}
\Psitet(k,\ell) &= \frac{\lambda}{\lambda + \theta} I \delta_{k,\ell} + \frac{\lambda}{\lambda + \theta}\sum_{v=k}^\ell \int_0^{\infty} e^{-\frac{\lambda+\theta}{c}y} \,\Phitet(y;k,v)\, dy \, \Utet(v,\ell) \nonumber \\
&= \frac{\lambda}{\lambda + \theta} I \delta_{k,\ell} + \frac{c}{\lambda + \theta} \sum_{v=k}^\ell  \Psitet(k,v) \Utet(v,\ell), \label{A1}
\end{align}
where the equality \eqref{A1} is obtained by using \eqref{A0} for $\ell = v$.

Let us turn to the matrix $\Phitet(x;k,\ell)$. The MMFF starts now from $X(0)=x$ and with $\phi(0)\in \Sr_k^-$. By conditioning on the time up to the first transition from $\Sr_k^-$ to $\Sr_{k+1}^+$, we obtain
\begin{align*}
\Phitet(x;k,\ell) & = e^{A_kx}\delta_{k,\ell} + (1-\delta_{k,\ell}) \int_0^{x}e^{A_k(x-y)} D_k  \sum_{v=k}^\ell \Psitet(k+1,v) \Phitet(y;v,\ell) \, dy\\
&=e^{A_kx}\Big(\delta_{k,\ell} I + (1-\delta_{k,\ell}) \int_0^{x}e^{-A_ky} D_k  \sum_{v=k}^\ell \Psitet(k+1,v) \Phitet(y;v,\ell) \, dy\Big).
\end{align*}
Differentiating with respect to $x$, we find
\begin{equation*}
\Phitet'(x;k,\ell) = A_k \Phitet(x;k,\ell) + (1-\delta_{k,\ell}) D_k  \sum_{v=k}^\ell \Psitet(k+1,v) \Phitet(x;v,\ell).
\end{equation*}
Finally, we take $x=0$ in the last equality. As $\Phitet(0) = I$ and $\Phitet'(0) = \Utet$, it yields \eqref{EQ_Utet}. Equations  \eqref{EQ_PSItet1} and \eqref{EQ_PSItet2} are obtained by injecting \eqref{EQ_Utet} in \eqref{A1}.
\end{proof}

{Thanks to block subdivision \eqref{structure_PSI}, the dimensions of the matrices that must be repeatedly inverted to numerically compute the matrix P in the above procedure are equal to the dimensions of the blocks $A_{\ell}$.
In most practical cases, these matrices $A_{\ell}$ have moderate size or can be assumed to have a special structure that can be exploited to decrease the computational cost of their inversion. The various blocks of $\Utet$ are easily computed from Equation \eqref{EQ_Utet}. Once these blocks are known, the blocks of the exponential matrix $\Phitet(x)$ can be computed efficiently using the special structure of $\Utet$ by following for instance Kressner \textit{et al.} \cite{Kressner} where the authors propose an efficient incremental procedure to compute the (block-triangular) square matrix constituted by the first $n$ block lines and columns of $e^{\Utet x}$, from the (previously obtained) square matrix constituted by the first $n-1$ block lines and columns of $e^{\Utet x}$.}

\vspace{1em}

\noindent {\bf Particular case where $A_k = A$ and $D_k=D$ for all $k\ge 1$.} In this case, $Y_k \sim \mathrm{PH}(\bm{\gamma}_k,A)$ where $\bm{\gamma}_k = \bm{\alpha} [(-A)^{-1}D]^{k-1}$. In other words, the claim sizes have the same distribution as the inter-arrival times in a Markov arrival process of parameters $(\bm{\alpha},A,D)$ (see Neuts \cite{neuts1979versatile}).  
From Proposition \ref{prop_cov},
\begin{equation*}
Cov(Y_k,Y_\ell) = \bm{\gamma}_k A^{-2} [D(-A)^{-1}]^{\ell-k}\bm{1}- \left(\bm{\gamma}_k (-A)^{-1} \bm{1}\right)\,\left(\bm{\gamma}_\ell (-A)^{-1}\bm{1}\right), \quad \quad \ell \ge k.
\end{equation*}

Observe that the matrices $\Psitet(k,\ell)$ and $\Utet(k,\ell)$ depend here on the difference $h=\ell-k$ only. Denoting $\widehat{\Psi}_{\theta}(h) = \Psitet(k,k+h)$ and $\widehat{U}_{\theta}(h) = \Utet(k,k+h)$ for any $k\ge 0$, we have from Proposition \ref{prop_psi_U} that
\begin{equation}\label{EQ_PSItet1_part}
\widehat{\Psi}_{\theta}(0) = \frac{\lambda}{\lambda + \theta} \left(I - \frac{c}{\lambda + \theta}A\right)^{-1},
\end{equation}
and, for $h >0$,
\begin{equation}\label{EQ_PSItet2_part}
\widehat{\Psi}_{\theta}(h) = \left(\frac{c}{\lambda + \theta} \sum_{v=0}^{h-1} \widehat{\Psi}_{\theta}(v) D \widehat{\Psi}_{\theta}(h \!-\!v \!-\! 1)\right)\left(I - \frac{c}{\lambda + \theta}A\right)^{-1}.
\end{equation}
Moreover,
\begin{equation}
\widehat{U}_{\theta}(h) = A \delta_{h,0} + (1-\delta_{h,0}) D  \widehat{\Psi}_{\theta}(h-1). \label{EQ_Utet_part}
\end{equation}
The formula in Proposition \ref{prop_tf} can be easily adapted with these new matrices. More importantly, the transform \eqref{def_tf_ruine} with $s=\infty$ has a simpler and more compact expression in this particular case:
\begin{corollary} \label{corr1}
When $A_k = A$ and $D_k=D$ for all $k\ge 1$,
\begin{equation}\label{eq:corr1}
\esp{e^{-\theta T}\ind{T<\infty,\, -R(T)\ge y}\,|\, R(0)=u} = \bm{\alpha} \widehat{\Psi}_{\theta} e^{\widehat{U}_{\theta}u} e^{Ay}\bm{1},
\end{equation}
where $\widehat{\Psi}_{\theta}$ is the minimal nonnegative solution of the Riccati equation
\begin{equation} \label{eq_PSIhat}
\frac{\lambda}{c}\, I + Z\left(A - \frac{\lambda + \theta}{c} I \right) + ZDZ = 0,
\end{equation}
and where $\widehat{U}_{\theta} = A + D\widehat{\Psi}_{\theta}$.
\end{corollary}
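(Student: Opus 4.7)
The plan is to start from Proposition \ref{prop_tf} applied with $s=\infty$, exploit the block translation-invariance that arises in the homogeneous case, and then collapse the infinite double sum into the compact matrix product in \eqref{eq:corr1}.

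When $A_k\equiv A$ and $D_k\equiv D$, the recursions \eqref{EQ_PSItet1}--\eqref{EQ_Utet} of Proposition \ref{prop_psi_U} depend only on $h=\ell-k$, so that $\Psi_\theta(k,k+h)=\widehat\Psi_\theta(h)$ and $U_\theta(k,k+h)=\widehat U_\theta(h)$ for every $k\ge 1$. Writing $\Phi_\theta(x;k,k+h)=\widehat\Phi_\theta(x;h)$, Proposition \ref{prop_tf} (taking $s=\infty$) yields
\begin{equation*}
\esp{e^{-\theta T}\ind{T<\infty,\,-R(T)\ge y}\,|\,R(0)=u}
=\bm{\alpha}\left(\sum_{h=0}^{\infty}\widehat\Psi_\theta(h)\right)\left(\sum_{h=0}^{\infty}\widehat\Phi_\theta(u;h)\right)e^{Ay}\bm{1},
\end{equation*}
where we used the change of indices $k\mapsto k-1$, $\ell\mapsto\ell-k$ and the fact that the series factors. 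Set $\widehat\Psi_\theta:=\sum_{h\ge 0}\widehat\Psi_\theta(h)$ and $\widehat\Phi_\theta(u):=\sum_{h\ge 0}\widehat\Phi_\theta(u;h)$; convergence of both series is guaranteed by the probabilistic interpretation of the summands as (sub)probabilities.

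The next step is to identify $\widehat\Phi_\theta(u)=e^{\widehat U_\theta u}$ with $\widehat U_\theta=A+D\widehat\Psi_\theta$. Summing the relation $\Phi_\theta'(x)=\Phi_\theta(x)U_\theta$ blockwise through the convolution identity $\widehat\Phi_\theta'(x;h)=\sum_{v=0}^{h}\widehat\Phi_\theta(x;v)\widehat U_\theta(h-v)$ and then over $h$, one gets $\widehat\Phi_\theta'(x)=\widehat\Phi_\theta(x)\widehat U_\theta$ with $\widehat\Phi_\theta(0)=I$, hence $\widehat\Phi_\theta(x)=e^{\widehat U_\theta x}$. The expression for $\widehat U_\theta$ then follows by summing \eqref{EQ_Utet_part}: $\widehat U_\theta=\widehat U_\theta(0)+\sum_{h\ge 1}D\widehat\Psi_\theta(h-1)=A+D\widehat\Psi_\theta$.

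To obtain the Riccati equation \eqref{eq_PSIhat}, sum \eqref{EQ_PSItet1_part}--\eqref{EQ_PSItet2_part} over $h\ge 0$, which gives
\begin{equation*}
\widehat\Psi_\theta=\left[\frac{\lambda}{\lambda+\theta}I+\frac{c}{\lambda+\theta}\widehat\Psi_\theta D\widehat\Psi_\theta\right]\left(I-\frac{c}{\lambda+\theta}A\right)^{-1}.
\end{equation*}
Multiplying on the right by $(I-\frac{c}{\lambda+\theta}A)$ and rearranging yields exactly $\frac{\lambda}{c}I+\widehat\Psi_\theta(A-\frac{\lambda+\theta}{c}I)+\widehat\Psi_\theta D\widehat\Psi_\theta=0$.

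The delicate point I anticipate is the \emph{minimality} of $\widehat\Psi_\theta$ as a nonnegative solution of \eqref{eq_PSIhat}. The natural route is to show that the recursion \eqref{EQ_PSItet1_part}--\eqref{EQ_PSItet2_part} produces the partial sums $\Psi_\theta^{(N)}:=\sum_{h=0}^{N}\widehat\Psi_\theta(h)$ as the iterates of a monotone nondecreasing operator starting from $0$ on the cone of nonnegative matrices, so that for any other nonnegative solution $Z$ of the Riccati equation one has $\Psi_\theta^{(N)}\le Z$ componentwise for every $N$, and then pass to the limit $N\to\infty$. This is the only part of the argument that is not a direct rearrangement of the preceding propositions; everything else is book-keeping on the block-triangular series.
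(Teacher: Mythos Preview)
Your proposal is correct and follows essentially the same route as the paper: define $\widehat\Psi_\theta$ and $\widehat U_\theta$ as the block sums, sum the recursions \eqref{EQ_PSItet1_part}--\eqref{EQ_PSItet2_part} to obtain the Riccati equation, and defer minimality to the standard monotone-iteration/first-passage argument (the paper simply cites Bean \textit{et al.}). The only cosmetic difference is that the paper obtains \eqref{eq:corr1} in one line by giving $\widehat\Psi_\theta$ and $e^{\widehat U_\theta x}$ their direct probabilistic interpretation as first-passage transforms of the MMFF, whereas you collapse the double sum from Proposition \ref{prop_tf} and then identify $\widehat\Phi_\theta(u)=e^{\widehat U_\theta u}$ via the ODE---both amount to the same thing.
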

\begin{proof}
Let us define
\begin{equation}
\widehat{\Psi}_{\theta} = \sum_{h=0}^{\infty} \widehat{\Psi}_{\theta}(h) \quad \text{and}~~ \widehat{U}_{\theta} = \sum_{h=0}^{\infty} \widehat{U}_{\theta}(h).
\end{equation}
Then $\widehat{\Psi}_{\theta}$ is the Laplace transform of the time $J(\tau_0)$ spent in $\Er_+$ before the first passage to level zero in the MMFF, starting from level zero in an ascending phase and $e^{\widehat{U}_{\theta}x}$ is the Laplace transform of the time $J(\tau_0)$ spent in $\Er_+$ before the first passage to level zero in the MMFF, starting from level $x$ in an descending phase. Formula \eqref{eq:corr1} is immediate from these interpretations.

To show that $\widehat{\Psi}_{\theta}$ is a solution of \eqref{eq_PSIhat}, it suffices to sum over $h$ in \eqref{EQ_PSItet1_part} and \eqref{EQ_PSItet2_part}: we have that
\begin{align*}
\widehat{\Psi}_{\theta} 
&= \frac{\lambda}{\lambda + \theta} \left(I - \frac{c}{\lambda + \theta}A\right)^{-1} + \frac{c}{\lambda + \theta} \sum_{h=1}^{\infty} \sum_{v=0}^{h-1} \widehat{\Psi}_{\theta}(v) D \widehat{\Psi}_{\theta}(h \!-\!v \!-\! 1)\left(I - \frac{c}{\lambda + \theta}A\right)^{-1},
\end{align*}
and therefore
\begin{align*}
\widehat{\Psi}_{\theta} \left(I - \frac{c}{\lambda + \theta}A\right)
&= \frac{\lambda}{\lambda + \theta}  + \frac{c}{\lambda + \theta} \sum_{h=0}^{\infty} \sum_{v=0}^{h} \widehat{\Psi}_{\theta}(v) D \widehat{\Psi}_{\theta}(h \!-\!v) \\
&= \frac{\lambda}{\lambda + \theta}  + \frac{c}{\lambda + \theta} \sum_{v=0}^{\infty} \sum_{h=v}^{\infty} \widehat{\Psi}_{\theta}(v) D \widehat{\Psi}_{\theta}(h-v) \\
&= \frac{\lambda}{\lambda + \theta}  + \frac{c}{\lambda + \theta} \widehat{\Psi}_{\theta} D \widehat{\Psi}_{\theta},
\end{align*}
which is equivalent to \eqref{eq_PSIhat}. The fact that \eqref{eq_PSIhat} admit the first passage matrix $\widehat{\Psi}_{\theta}$ as its minimal nonnegative solution is proved e.g.\ in Bean \textit{et al.}\ \cite{Bean}. 
\end{proof}

{The issue of numerically solving the Riccati equations that arise in fluid queues, like Equation \eqref{eq_PSIhat}, has been investigated by various authors over the past several decades. This has led to the development of various stable and efficient procedures such as the logarithmic reduction algorithm (see Latouche and Ramaswami \cite{MAM}), as well as the algorithms proposed in Bini \textit{et al.}\ \cite{Bini} and Guo \cite{GuoAlg}. For instance, the stability of some of the aforementioned algorithms is affirmed in \cite{GuoAlg} when the dimension of the matrix in question is $100\times 100$.}
Note that the matrix $\widehat{\Psi}_{0}$ provides us with a simple criterion to check whether the ruin occurs almost surely in finite time or not:
if $\widehat{\Psi}_{0}$ is stochastic (i.e.\ $\widehat{\Psi}_{0}\bm{1}=\bm{1}$), then the exponential of $\widehat{U}_{0} = A + D\widehat{\Psi}_{0}$ is also stochastic, and therefore we have from \eqref{eq:corr1} that
\begin{equation*}
\widehat{\Psi}_{0}~ \text{is stochastic} \, \Rightarrow \, \mathbb{P}\Big(\inf_{s\ge 0} R(s)< 0 \,|\, R(0)=u\Big) = 1,
\end{equation*}
whatever the value of $u$ is.

\subsection{Risk process in a random environment} \label{A_Random_env}

In this section, we briefly explain how the method presented in Sections \ref{BBB_MMFF} and \ref{BBB_tf} can be easily extended to the analysis of risk processes in a Markov environment. 
Let $\{\xi(t)\,|\, t\ge 0\}$ be a Markov jump process on a state space $\mathcal{X}$ with $|\mathcal{X}| < \infty$, characterized by its generator $\Theta$ and initial probability vector $\bm{q}$.
Consider the process $\{(R(t),\xi(t))\,|\, t\ge 0\}$ in which the reserves has the dynamics
\begin{equation}\label{risk_proc_MPH2}
R(t)= u + \int_0^t c_{\xi(s)} \, ds \, - \sum_{k=1}^{N_{\xi}(t)}Y_k.
\end{equation}
Here, $\{N_{\xi}(t)\}_{t\ge 0}$ is a Markov-modulated Poisson process with intensity $\lambda_i$ when $\xi$ is in state $i \in \mathcal{X}$. As before, for all $n \ge 1$, the vectors $(Y_1, Y_2, \dots, Y_n)$ are independent of $\{N_{\xi}(t)\}$ and follow a multivariate phase-type distribution with representation (\ref{gen_G}). 
In other words, $\{R(t)\}$ is a risk process under the influence of the random environment $\{\xi(t)\}$. The premium rate $c_{\xi(t)}$ and the claim arrival rate $\lambda_{\xi(t)}$ depend on the state of $\xi$ at time $t$.

This risk process can be analysed through an embedded Markov-modulated fluid flow $\{(X(t),\phi(t))\}$ slightly more general than in Section \ref{BBB_MMFF}, with generator $Q$ of the form
\begin{align*} 
& Q_{++} =  \begin{bmatrix}
V_1 & 0 & 0 &  \cdots   \\
0& V_2 & 0 & \cdots  \\
0&0& V_3 &  \cdots  \\
\vdots &\vdots &\vdots & \ddots
\end{bmatrix},~~~~~~~~~~
Q_{+-} =  \begin{bmatrix}
W_1 & 0 & 0 &  \cdots   \\
0& W_2 & 0 &  \cdots  \\
0&0& W_3&  \cdots  \\
\vdots &\vdots &\vdots  & \ddots 
\end{bmatrix} \\
& Q_{-+} =  \begin{bmatrix}
0 & F_1 & 0 & 0 &  \cdots   \\
0 & 0& F_2 & 0 & \cdots  \\
0 & 0&0& F_3 &  \cdots  \\
\vdots &\vdots &\vdots  &\vdots &
\end{bmatrix},~~~~~~
Q_{--} =  \begin{bmatrix}
G_1 & 0 & 0 &  \cdots   \\
0& G_2& 0 &  \cdots  \\
0&0& G_3&  \cdots  \\
\vdots &\vdots &\vdots & \ddots 
\end{bmatrix},
\end{align*}
with, for all $k \ge 1$,
\begin{align*}
& V_k = (\Theta - L) \otimes I_k , ~~~~ W_k = L \otimes I_k, \\
& F_k = I\otimes D_k, ~~~~~~~~~~G_k =  I\otimes A_k,
\end{align*}
where $L = \text{diag}(\lambda_i)_{i \in \mathcal{X}}$, $I_k$ is the identity matrix with the same dimension as $A_k$, $I$ is the identity matrix of dimension $|\mathcal{X}|$ and $\otimes$ is the Kronecker product.
The rate matrices are
\begin{equation*}
C_{+} =  \begin{bmatrix}
C_1 & 0 & 0 &  \cdots   \\
0& C_2 & 0 &  \cdots  \\
0&0& C_3 &  \cdots  \\
\vdots  &\vdots &\vdots & \ddots
\end{bmatrix},~~
C_{-} =  -I_{\infty}.
\end{equation*} 
where $C_k = I_k \otimes \text{diag}(c_i)_{i \in \mathcal{X}}$. The equality in Proposition \ref{prop_tf} is still valid in this case, for matrices $\Psitet$, $\Phitet(x)$ and $\Utet$ with the same definition and block structure as in Section \ref{BBB_tf}. The equations in Proposition \ref{prop_psi_U} need to be adapted to the new generator $Q$, but it can be easily done by following the same argument as in the proof of Proposition \ref{prop_psi_U}.

\vspace{1em}

\noindent {\bf Remark.} This extension also allows us to analyse the same risk process as in \eqref{risk_proc_MPH} except that the inter-arrival times between two claims are no longer exponential but rather distributed as independent $\mathrm{PH}(\bm{\gamma},U)$ random variables. For that, it suffices to use the extension above with $V_k = U \otimes I$ and $W_k = \bm{u} \otimes I$ for all $k$ (where $\bm{u} = -U\bm{1}$), $F_k = D_k \otimes \bm{\gamma}$ and $G_k = A_k$, with the same rate matrices as in Section \ref{BBB_MMFF}.


\subsection{Probability of ultimate ruin}

A natural question about the risk process $\{R(t)\}$ defined by \eqref{risk_proc_MPH} is whether it will get ruined in finite time with probability one, whatever the initial reserves are. One way to answer that question would be to compute the r.h.s.\ of (\ref{partTN}) with $s$ large enough, but is is sometimes more convenient to have simpler criteria. Here, we develop a quick method which can help to assess whether the ultimate ruin is certain or not. It is based on bounding the process (\ref{risk_proc_MPH}) with simple stochastic processes in the stochastic ordering sense.

We first recall the following definitions. First, the \textit{dominating eigenvalue} of a square matrix $A$ is the eigenvalue which has the largest real part. If $A$ is the sub-generator matrix associated to a phase-type distribution, then it can be shown that its dominating eigenvalue is real and strictly negative (see e.g.\ \cite{o1990characterization}).
Next, let $X$ and $Y$ two positive random variables with density functions $f_X$ and $f_Y$. We say that $Y$ \textit{dominates} $X$ in the sense of the usual univariate stochastic order ($X\stochle Y$) if
\[\int_x^\infty f_X(y) \, dy \le \int_x^\infty f_Y(y) \, dy \quad \quad \forall x\ge 0,\]
see e.g.\ Shaked and Shanthikumar \cite{shaked2007stochastic}. 

\begin{lemma}\label{th:stoch_order0}
Let $Y\sim\mathrm{PH}(\bm{\alpha}, A)$. Let $p_0$ and $-\sigma_0$ be the size and the dominant eigenvalue of $A$, and $\nu_0=\max_{1\le i\le p}\{(-A\bm{1})_i\}$. Fix $\nu\in [\nu_0,\infty)$, $p\in \{p_0, p_0+1, p_0+2,\dots\}$ and $\sigma\in (0,\sigma_0]$. Then
\[L\stochle Y\stochle H\]
where $L\sim\mathrm{Exp}(\nu)$ and $H\sim\mathrm{Erlang}(p,\sigma)$.
\end{lemma}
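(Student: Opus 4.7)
I would prove the two inequalities $L \stochle Y$ and $Y \stochle H$ separately.

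For $L \stochle Y$, since $\nu \ge \nu_0$ implies $\mathrm{Exp}(\nu) \stochle \mathrm{Exp}(\nu_0)$, it suffices to handle $\nu = \nu_0$. The plan is to bound the hazard rate of $Y$ uniformly by $\nu_0$. From $\prob{Y > t} = \bm{\alpha}e^{At}\bm{1}$ and $\tfrac{d}{dt}\bm{\alpha}e^{At}\bm{1} = -\bm{\alpha}e^{At}(-A\bm{1})$, one has
\[
-\frac{d}{dt}\log\prob{Y > t} \;=\; \frac{\bm{\alpha}e^{At}(-A\bm{1})}{\bm{\alpha}e^{At}\bm{1}} \;\le\; \nu_0,
\]
because each entry of $\bm{\alpha}e^{At}$ is non-negative and each component of $-A\bm{1}$ is at most $\nu_0$. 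Integrating on $[0,t]$ and exponentiating yields $\prob{Y > t} \ge e^{-\nu_0 t} = \prob{\mathrm{Exp}(\nu_0)>t}$, which is precisely $L \stochle Y$.

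For $Y \stochle H$, I would route through the intermediate variable $H_0 \sim \mathrm{Erlang}(p_0,\sigma_0)$. The inequality $H_0 \stochle H$ is the standard monotonicity of the Erlang in its parameters: decreasing the common rate from $\sigma_0$ to $\sigma$ and then appending $p - p_0$ additional non-negative i.i.d.\ summands both preserve the stochastic order, so $\mathrm{Erlang}(p_0,\sigma_0) \stochle \mathrm{Erlang}(p_0,\sigma) \stochle \mathrm{Erlang}(p,\sigma)$. It remains to establish $Y \stochle H_0$.

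For this final step, the plan is to invoke the acyclic (Cumani-type bidiagonal) canonical form for phase-type distributions: up to stochastic equivalence, $Y$ is the absorption time in an upper triangular $p_0$-state chain whose diagonal rates are the moduli $|\lambda_k(A)|$ of the eigenvalues of $A$. By definition of the dominant eigenvalue, each such rate is at least $\sigma_0$ (a fact that can also be derived directly from the M-matrix property of $-A$, which gives $\sigma_0 \le \min_i (-A_{ii})$). Under this representation the chain visits at most $p_0$ distinct states with no returns, so $Y$ equals in distribution a random sum of at most $p_0$ independent exponentials of rates $\ge \sigma_0$. A coupling that replaces each such exponential by a (stochastically larger) independent $\mathrm{Exp}(\sigma_0)$ variable dominates $Y$ by a sum of $p_0$ such variables, namely $H_0$.

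The main obstacle is this last step $Y \stochle H_0$, and in particular the case in which $A$ has complex eigenvalues, where the bidiagonal representation is not directly available. A natural workaround is to approximate $A$ by subgenerators with real spectrum (using the weak density of phase-type distributions recalled in Section \ref{subsec:uniPH}) and pass to the limit in the survival functions; alternatively, a direct tail comparison exploits that $\prob{H_0 > t} = e^{-\sigma_0 t}\sum_{k=0}^{p_0-1}(\sigma_0 t)^k/k!$ matches the polynomial-times-exponential form of the spectral bound on $\bm{\alpha}e^{At}\bm{1}$. By contrast, the first half $L \stochle Y$ is a short, purely computational hazard-rate argument.
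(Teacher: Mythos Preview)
Your argument for $L \stochle Y$ via the hazard-rate bound, and your reduction $H_0 \stochle H$ via monotonicity of the Erlang family, are essentially identical to the paper's proof. The only substantive difference is in the step $Y \stochle H_0$: the paper does not argue this directly but simply invokes \cite[Corollary 2.1]{he2019moment}.

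Your attempt to prove $Y \stochle H_0$ from scratch has a genuine gap. The Cumani bidiagonal canonical form is valid only for \emph{acyclic} phase-type distributions; a general $\mathrm{PH}(\bm{\alpha},A)$ need not admit any upper-triangular representation of the same order $p_0$. In particular, when $A$ has genuinely complex eigenvalues (which you yourself flag as the obstacle), no real triangular sub-generator of size $p_0$ can reproduce the law of $Y$, so the coupling ``visit at most $p_0$ states, each with exit rate $\ge \sigma_0$'' is not available. Your proposed workarounds do not close this: a weak-density approximation by real-spectrum sub-generators does not simultaneously preserve the order $p_0$ and the dominant eigenvalue $\sigma_0$, and the ``direct tail comparison'' would require showing that the polynomial factor in the Jordan expansion of $\bm{\alpha}e^{At}\bm{1}$ is dominated \emph{for every} $t\ge 0$ by $\sum_{k=0}^{p_0-1}(\sigma_0 t)^k/k!$ --- which is exactly the non-trivial content of the cited result, not a consequence of the spectral radius alone. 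The paper's route is therefore to outsource this step to \cite{he2019moment} rather than reprove it.
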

\begin{proof}
Since $\bm{\alpha} e^{Ay}$ is a nonnegative row vector,
\begin{align}
\frac{f_Y(y)}{1-F_Y(y)} & = \frac{\bm{\alpha} e^{Ay}(-A\bm{1})}{\bm{\alpha}e^{Ay}\bm{1}} \le \frac{\bm{\alpha} e^{Ay}(\nu \bm{1})}{\bm{\alpha}e^{Ay}\bm{1}}=\nu \quad\mbox{for all }y\ge 0,\label{eq:hazard1}
\end{align}
In other words, the hazard rate of $Y$ is smaller than the one of $L$, a relation commonly written as $L\hazardle Y$. By \cite[Theorem 1.B.1]{shaked2007stochastic} it implies that $L\stochle Y$. 

To obtain the second inequality, note first that from He \textit{et al.}\ \cite[Corollary 2.1]{he2019moment}, we have $Y\stochle H_0$ where $H_0\sim\mathrm{Erlang}(p_0,\sigma_0)$. Now, if $p\ge p_0$, let $H_1\sim \mathrm{Erlang}(p-p_0,\sigma)$ be independent of $H_0$ (with $H_1=0$ if $p=p_0$). Then $\tfrac{\sigma_0}{\sigma}(H_0+H_1)\ge H_0$ a.s., which yields $H_0 \stochle H$ since $H\sim \tfrac{\sigma_0}{\sigma}(H_0+H_1)$.
\end{proof}

The concept of stochastic ordering is extended to the multivariate setting in the following way: Let $\bm{X}^1$ and $\bm{X}^2$ be two $n$-dimensional random vectors with density functions $f_{\bm{X}^1}$ and $f_{\bm{X}^2}$. We  say that $\bm{X}^2$ \textit{dominates} $\bm{X}^1$ in the sense of the usual stochastic order ($\bm{X}^1\stochle\bm{X}^2$) if 
\[\int_{\bm{x}\in \Gamma} f_{\bm{X}^1}(\bm{x}) d\bm{x}\le \int_{\bm{x}\in \Gamma} f_{\bm{X}^2}(\bm{x}) d\bm{x}\]
for all increasing set $\Gamma\subseteq\mathbb{R}^n$ (i.e.\ for all set $\Gamma$
such that $\bm{x}\in\Gamma$ and $\bm{y}$ is nonnegative implies $\bm{x}+\bm{y}\in\Gamma$).

It is often hard to extend univariate stochastic ordering properties to the multivariate setting. However, an extension of Lemma \ref{th:stoch_order0} to the particular class of multivariate phase-type distributions given by (\ref{joint_density}) can be obtained as follows:
\begin{lemma}\label{th:stoch_order1}
Let $\bm{Y}^{(n)}=(Y_1,\dots,Y_n)$ follow the law given by (\ref{joint_density}). Let $p_n$ and $-\sigma_n$ the largest dimension and dominant eigenvalue amongst the matrices $A_1,A_2,\dots,A_n$, and $\nu_n=\max_{1\le i\le p, 1\le k \le n}\{(-A_k\bm{1})_i\}$. Fix $\nu\in [\nu_n,\infty)$, $p\in \{p_n, p_n+1, p_n+2,\dots\}$ and $\sigma\in (0,\sigma_n]$. Then
\[ \bm{L}^{(n)}\stochle \bm{Y}^{(n)}\stochle \bm{H}^{(n)},\]
where $\bm{L}^{(n)}$ and $\bm{H}^{(n)}$ are $n$-dimensional random vectors with i.i.d.\ entries which are $\mathrm{Exp}(\nu)$- and $\mathrm{Erlang}(p, \sigma)$-distributed, respectively.
\end{lemma}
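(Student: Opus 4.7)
The plan is to construct, on a common probability space, a coupling of $\bm{L}^{(n)}$, $\bm{Y}^{(n)}$ and $\bm{H}^{(n)}$ such that $L_k\le Y_k\le H_k$ almost surely for every $k=1,\ldots,n$. By Strassen's characterisation of the usual multivariate stochastic order (see Shaked and Shanthikumar \cite[Thm.~6.B.1]{shaked2007stochastic}), pointwise domination under a coupling is equivalent to $\stochle$, so this reduces the lemma to constructing the coupling.

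The coupling exploits the sequential nature of $\varphi$ underlying the joint density \eqref{joint_density}. I would draw $n$ independent uniforms $U_1,\ldots,U_n$ on $[0,1]$ and, independently, an initial phase $J_1\sim\bm{\alpha}$. Then I would set
\[L_k:=F_{\mathrm{Exp}(\nu)}^{-1}(U_k),\qquad H_k:=F_{\mathrm{Erlang}(p,\sigma)}^{-1}(U_k),\qquad Y_k:=F_{\mathrm{PH}(\bm{e}_{J_k}^\intercal,A_k)}^{-1}(U_k),\]
and draw the next entry phase $J_{k+1}$ from its conditional law given $(J_k,Y_k)$ using extra independent randomness. By construction $\bm{L}^{(n)}$ and $\bm{H}^{(n)}$ are i.i.d.\ with the prescribed marginals. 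A direct computation using the strong Markov property of $\varphi$ at the successive entry times to $\Sr_1,\Sr_2,\ldots$ shows that $(J_1,Y_1,\ldots,J_n,Y_n)$ obtained this way has the correct joint law, and marginalising over the $J_k$'s recovers \eqref{joint_density}.

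The inequalities $L_k\le Y_k\le H_k$ then follow from the univariate Lemma \ref{th:stoch_order0}. For any realisation of $J_k$, the matrix $A_k$ has size at most $p_n\le p$, dominant eigenvalue $-\sigma_{0,k}$ with $\sigma_{0,k}\ge\sigma_n\ge\sigma$, and maximal exit rate $\max_i(-A_k\bm{1})_i\le\nu_n\le\nu$, so Lemma \ref{th:stoch_order0} gives $\mathrm{Exp}(\nu)\stochle\mathrm{PH}(\bm{e}_{J_k}^\intercal,A_k)\stochle\mathrm{Erlang}(p,\sigma)$. Rewriting the ordering through generalised inverses yields $F_{\mathrm{Exp}(\nu)}^{-1}(u)\le F_{\mathrm{PH}(\bm{e}_{J_k}^\intercal,A_k)}^{-1}(u)\le F_{\mathrm{Erlang}(p,\sigma)}^{-1}(u)$ for every $u\in(0,1)$ and every admissible realisation of $J_k$; evaluating at $u=U_k$ closes the coupling.

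The delicate point is preserving the independence of $L_1,\ldots,L_n$ (and of $H_1,\ldots,H_n$) despite the strong dependence among the coordinates of $\bm{Y}^{(n)}$. This is why the construction is careful to build $L_k$ and $H_k$ from $U_k$ alone, while letting the past influence $Y_k$ only through the entry phase $J_k$, which by the strong Markov property is independent of $U_k$. Consequently $\bm{L}^{(n)}$ and $\bm{H}^{(n)}$ remain i.i.d.\ with the correct marginals even though they share uniforms with the non-independent vector $\bm{Y}^{(n)}$.
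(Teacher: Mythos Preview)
Your coupling argument is correct and genuinely different from the paper's proof. The paper works directly with the upper-set definition of $\stochle$: for an increasing set $\Gamma\subset\mathbb{R}_+^n$ it expands $\int_\Gamma f_{\bm{Y}^{(n)}}$ via Fubini, recognises each inner integral as the tail of a phase-type law with initial vector $\tfrac{\bm{\alpha}e^{A_1y_1}D_1\cdots}{\bm{\alpha}e^{A_1y_1}D_1\cdots\bm{1}}$, and then bounds it coordinate by coordinate using Lemma~\ref{th:stoch_order0}; the full result follows by induction on $n$. You instead invoke Strassen's theorem and exhibit the coupling explicitly, building $Y_k$ from the entry phase $J_k$ and an independent uniform $U_k$ via the quantile transform, while $L_k$ and $H_k$ are built from $U_k$ alone. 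Both routes hinge on the same univariate comparison (Lemma~\ref{th:stoch_order0}, applied to $\mathrm{PH}(\bm{e}_{J_k}^\intercal,A_k)$ for every possible entry phase), but your construction is more probabilistic and makes the independence of the $L_k$'s and $H_k$'s transparent, whereas the paper's integration argument avoids constructing any coupling and stays closer to the density formula \eqref{joint_density}.
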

\begin{proof}
We prove the result for $n=2$ only, the case $n> 2$ easily follows by induction on $n$. For a fixed increasing set $\Gamma\subseteq\mathbb{R}_+^2$, define $\forall y\in\mathbb{R}_+$ the sets $\Gamma_y=\{x\in\mathbb{R} : (y,x)\in\Gamma\}$ and $\Gamma^y=\{x\in\mathbb{R} : (x,y)\in\Gamma\}$. We have that
\begin{align*}
& \int\int_{\Gamma} \bm{\alpha}e^{A_1y_1}D_1e^{A_2y_2}D_2\bm{1} \, dy_1 \, dy_2\\
& \quad \quad \quad \quad \quad  = \int_{0}^\infty (\bm{\alpha}e^{A_1y_1}D_1\bm{1}) \left(\int_{\Gamma_{y_1}} \frac{\bm{\alpha}e^{A_1y_1}D_1}{\bm{\alpha}e^{A_1y_1}D_1\bm{1}}e^{A_2y_2}D_2\bm{1} \, dy_2 \right)dy_1\\
& \quad \quad \quad \quad \quad = \int_{0}^\infty (\bm{\alpha}e^{A_1y_1}D_1\bm{1}) \left(\int_{\Gamma_{y_1}} \frac{\bm{\alpha}e^{A_1y_1}D_1}{\bm{\alpha}e^{A_1y_1}D_1\bm{1}}e^{A_2y_2}(-A_2\bm{1}) \, dy_2 \right)dy_1.
\end{align*}
Note that the quotient in the last two equalities is a probability vector, and thus the function $y_2 \to \tfrac{\bm{\alpha}e^{A_1y_1}D_1}{\bm{\alpha}e^{A_1y_1}D_1\bm{1}}e^{A_2y_2}(-A_2\bm{1})$ is a phase-type density for each fixed $y_1>0$.
We can therefore apply Lemma \ref{th:stoch_order0} to obtain
\[\int_{\Gamma_{y_1}} \frac{\bm{\alpha}e^{A_1y_1}D_1}{\bm{\alpha}e^{A_1y_1}D_1\bm{1}}e^{A_2y_2}(-A_2\bm{1}) \, dy_2\le \int_{\Gamma_{y_1}} f_{p,\sigma} (y_2) \, dy_2,\]
where $f_{p,\sigma}$ is the density function of an Erlang random variable with parameters $p$ and $\sigma$. Consequently,
\begin{align*}
\int\int_{\Gamma} \bm{\alpha}e^{A_1y_1}D_1e^{A_2y_2}D_2\bm{1} \, dy_1 \, dy_2
& \le \int_{0}^\infty (\bm{\alpha}e^{A_1y_1}D_1\bm{1}) \left(\int_{\Gamma_{y_1}} f_{p,\sigma}(y_2) \, dy_2 \right)dy_1\\
& = \int_{0}^\infty \left(\int_{\Gamma^{y_2}}\bm{\alpha}e^{A_1y_1}(-A_1\bm{1}) \, dy_1\right) f_{p,\sigma}(y_2) \, dy_2\\
& \le \int_{0}^\infty \left(\int_{\Gamma^{y_2}}f_{p,\sigma}(y_1) \, dy_1\right) f_{p,\sigma}(y_2) \, dy_2\\
& = \int\int_{\Gamma} f_{p,m}(y_1)  f_{p,m}(y_2) \, dy_1 \, dy_2,
\end{align*}
where we applied Lemma \ref{th:stoch_order0} again in the second-to-last step. This yields $\bm{Y}^{(2)}\stochle \bm{H}^{(2)}$. The second inequality $\bm{L}^{(2)}\stochle \bm{Y}^{(2)}$ follows by similar arguments, replacing $f_{p,\sigma}(t)$ with $\nu e^{-\nu t}$ and inverting the inequalities above. 
\end{proof}

Let us go back to our risk process $\{R(t)\}$ defined by \eqref{risk_proc_MPH}, in which the claim sizes are given by a sequence $\{Y_n\}_{n\ge 1}$ of phase-type random variables with representation (\ref{gen_G}). Lemma \ref{th:stoch_order0} allows us to bound the ultimate ruin probability in this process: assume that
\begin{enumerate}
\item[(A1)] The dimensions of $A_1, A_2,\dots$ are bounded by $p<\infty$,
\item[(A2)] The sequence $\max_i\{(-A_1\bm{1})_i\}, \max_i\{(-A_2\bm{1})_i\},\dots$ is bounded from above by $\nu<\infty$,
\item[(A3)] The dominating eigenvalues of $A_1, A_2,\dots$ are bounded from above by $-\sigma>0$.
\end{enumerate}
Let $\{L_i\}_{i\ge 1}$ and $\{H_i\}_{i\ge 1}$ be two i.i.d.\ sequences with 
$L_i \sim \mathrm{Exp}(\nu)$ and $H_i \sim \mathrm{Erlang}(p, \sigma)$ for all $i>0$, and define the two risk processes $\{R_L(t)\}$ and $\{R_H(t)\}$ such that
\begin{align*}
R_L(t)&= u + ct - \sum_{k=1}^{N(t)}L_k, \quad \quad R_H(t)= u + ct - \sum_{k=1}^{N(t)}H_k,
\end{align*}
where $u$, $c$ and $\{N(t)\}$ are as in \eqref{risk_proc_MPH}. Then,
\begin{proposition}\label{th:boundsruinfinal}
Under the assumptions (A1), (A2) and (A3),
\begin{align*}
\mathbb{P}\Big(\inf_{t\ge 0} R_L(t)< 0\,|\,R_L(0)=u\Big) & \le \mathbb{P}\Big(\inf_{t\ge 0} R(t)< 0\,|\,R(0)=u\Big)\\
& \le \mathbb{P}\Big(\inf_{t\ge 0} R_H(t)< 0\,|\,R_H(0)=u\Big) .
\end{align*}
\end{proposition}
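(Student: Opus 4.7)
The plan is to couple $R$, $R_L$ and $R_H$ on a common Poisson clock and reduce the comparison to the multivariate stochastic ordering supplied by Lemma \ref{th:stoch_order1}. Since each of the three reserve processes is strictly increasing between claim arrivals, its running infimum is attained immediately after a jump. Writing $\tau_1 < \tau_2 < \cdots$ for the jump times of the common Poisson process $\{N(t)\}$ and $S_n = \sum_{k=1}^n Y_k$ (with analogous $S_n^L$, $S_n^H$), I would first rewrite
$$\Bigl\{\inf_{t\ge 0} R(t) < 0\Bigr\} = \Bigl\{\sup_{n\ge 1}(S_n - c\tau_n) > u\Bigr\},$$
and similarly for $R_L$, $R_H$. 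Because all three models can be built on the same $\{N(t)\}$, the comparison reduces to comparing the claim sequences only.

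Next, by continuity of probability along the increasing events $\{\max_{n\le N}(S_n - c\tau_n) > u\}$, each ultimate ruin probability equals $\lim_{N\to\infty} P(\max_{1\le n\le N}(S_n - c\tau_n) > u)$. For each fixed $N$, I would condition on $\bm{\tau} := (\tau_1,\ldots,\tau_N)$, which is independent of $\bm{Y}^{(N)}$ (and of $\bm{L}^{(N)}$, $\bm{H}^{(N)}$). The set
$$\Gamma^N_{\bm{\tau},u} = \Bigl\{\bm{y}\in\R_+^N : \max_{1\le n\le N}\bigl(\textstyle\sum_{k=1}^n y_k - c\tau_n\bigr) > u\Bigr\}$$
is an upper set in $\R_+^N$, since raising any component $y_k$ can only raise the partial sums $\sum_{j\le n} y_j$ for $n\ge k$. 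Under assumptions (A1)--(A3) the parameters $\nu$, $p$, $\sigma$ appearing in Lemma \ref{th:stoch_order1} may be chosen uniformly in $N$, so the lemma delivers $\bm{L}^{(N)}\stochle \bm{Y}^{(N)}\stochle \bm{H}^{(N)}$ with the \emph{same} i.i.d.\ $\mathrm{Exp}(\nu)$ and $\mathrm{Erlang}(p,\sigma)$ reference distributions for every $N$. Applying the defining property of the multivariate stochastic order to the upper set $\Gamma^N_{\bm{\tau},u}$, then taking expectations over $\bm{\tau}$ and letting $N\to\infty$, yields the two announced inequalities.

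The main subtlety, which the three technical assumptions are designed to remove, is that Lemma \ref{th:stoch_order1} is stated for a fixed finite dimension whereas the ultimate ruin event involves arbitrarily many claims. Assumptions (A1)--(A3) guarantee that a \emph{single} $\mathrm{Exp}(\nu)$ lower bound and a \emph{single} $\mathrm{Erlang}(p,\sigma)$ upper bound dominate each $Y_k$ marginally in a way that lifts to the full multivariate setting through the lemma; the rest is a routine coupling plus the monotonicity of the ruin event in the claim sizes.
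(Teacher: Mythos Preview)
Your argument is correct and is essentially the same as the paper's proof: both condition on the Poisson arrival times, recognise that the truncated ruin event defines an increasing set (equivalently, its indicator is an increasing function) in the claim vector, apply Lemma \ref{th:stoch_order1} with the uniform parameters guaranteed by (A1)--(A3), and pass to the limit in $N$. The only cosmetic difference is that the paper phrases the comparison via the increasing-function characterisation of $\stochle$ (citing \cite[Section~6.B.1]{shaked2007stochastic}) rather than directly via increasing sets.
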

\begin{proof}
Let $\mathcal{N}=\{N(t)\}$ and fix $n\ge 1$, $x_1,\dots,x_n\ge 0$. Define
\begin{align*}
c_\mathcal{N}^n(x_1,\dots,x_n) &= \left\{\begin{array}{cc}1&\mbox{if } u+ cs -\sum_{k=1}^{N(s)\wedge n,} x_k< 0\mbox{ for some }s\ge 0\\0&\mbox{otherwise.}\end{array}
\right.
\end{align*}
It is clear that conditionally on $\mathcal{N}$, the function $c_\mathcal{N}^n:\mathbb{R}^n\rightarrow\mathbb{R}$ is increasing. Thus, by Lemma \ref{th:stoch_order1} and \cite[Section 6.B.1]{shaked2007stochastic},
\[\mathbb{E}(c_\mathcal{N}^n(L_1,\dots,L_n)\,|\,\mathcal{N})\le \mathbb{E}(c_\mathcal{N}^n(Y_1,\dots,Y_n)\,|\,\mathcal{N})\le \mathbb{E}(c_\mathcal{N}^n(H_1,\dots,H_n)\,|\,\mathcal{N}),\]
and therefore
\[\mathbb{E}(c_\mathcal{N}^n(L_1,\dots,L_n))\le \mathbb{E}(c_\mathcal{N}^n(Y_1,\dots,Y_n))\le \mathbb{E}(c_\mathcal{N}^n(H_1,\dots,H_n)).\]
The result follows since
\begin{align*}
\mathbb{E}(c_\mathcal{N}^n(Y_1,\dots,Y_n))&\uparrow \mathbb{P}\Big(\inf_{s\ge 0} R(s)< 0 \,|\,R(0)=u\Big),\\
\mathbb{E}(c_\mathcal{N}^n(L_1,\dots,L_n))&\uparrow \mathbb{P}\Big(\inf_{s\ge 0} R_L(s)< 0\,|\,R_L(0)=u\Big),\\
\mathbb{E}(c_\mathcal{N}^n(H_1,\dots,H_n))&\uparrow \mathbb{P}\Big(\inf_{s\ge 0} R_H(s)< 0\,|\,R_H(0)=u\Big) 
\end{align*}
as $n$ goes to infinity.
\end{proof}

The processes $\{R_L(t)\}$ and $\{R_H(t)\}$ are Cram\'er-Lundberg processes with exponential or Erlang claims, and the corresponding ultimate ruin probabilities are given by very simple and explicit formulae (see e.g.\ Amsussen and Albrecher \cite[Chapter IX]{AsmussenRuin1}). An application of the bounds in Proposition \ref{th:boundsruinfinal} is that they often provide a quick test to check whether our risk process \eqref{risk_proc_MPH} gets almost surely ruined in finite time or not, by using the relations
\[\mathbb{P}\Big(\inf_{s\ge 0} R_L(s)< 0\,|\,R_L(0)=u\Big)=1\Rightarrow \mathbb{P}\Big(\inf_{s\ge 0} R(s)< 0\,|\,R(0)=u\Big)=1,\] and 
\[\mathbb{P}\Big(\inf_{s\ge 0} R_H(s)< 0\,|\,R_H(0)=u\Big)<1\Rightarrow \mathbb{P}\Big(\inf_{s\ge 0} R(s)< 0\,|\,R(0)=u\Big)<1.\]
In most situations where $\prob{\inf_{s\ge 0} R(s)< 0\,|\,R(0)=u}<1$, our bounds are not tight enough to give a good estimate of the ultimate ruin probability, and using formula \eqref{partTN} with $s$ large enough provides much better results. Improving the bounds of Proposition \ref{th:boundsruinfinal} will be the object of further work.



\section{Numerical illustrations} \label{AAA_illustrations}
 
\noindent {\bf Example 2 (continued).} Let us consider the risk process \eqref{risk_proc_MPH} in which the claims are distributed as in Example 2 of Section \ref{BBB_mult_PH}. 
It covers the case where the claims can take two possible forms. For instance, $U\sim \mathrm{PH}(\bm{\beta},B)$ could have a moderate expectation and variance to represent the size of a regular claim. The variable $V\sim \mathrm{PH}(\bm{\gamma},G)$ could have a much higher expectation and/or variance to represent the size of scarce but more severe claims. 
The probabilities $r_k$ and $p_k$ regulate the contagion effect in the kind of claims: for instance, high values for the probabilities $r_k$ mean that a moderate claim is often followed by another moderate claim. High values for the probability $\rho_k$ would mean that once a severe claim occurs, the probability that more severe claims will occur successively is high. The dependence in $k$ of these probabilities can reflect the fact that the company has the means to learn from the past and decrease the risk of severe claim and the number of their successive occurrence.

For this illustration, we assume that the variables $U_k$ are exponentially distributed with parameter $\mu$ and that the $V_i$ are Erlang variables with parameters $m \in \N_0$ and $\mu$. The probabilities $p_k$ are kept constant: $p_k = p$ for all $k$. The probabilities $r_k$ increase with $k$ as $r_k = a + (1-a) (k/(k+1))$ for some $a \in (0,1)$. 

Tables \ref{Table1} and \ref{Table2} show the means, variances and some correlations of the vector $(Y_1,...,Y_8)$ when the parameter values are $\mu = 1$, $m=5$, $r=0.7$,  $p=0.8$ and $a=0.6$. The choice of a sequence $r_k$ increasing to one implies that the expectation and variance of $Y_k$ quickly start to decrease to get closer and closer to the expectation and variance of the exponential distribution with parameter $\mu$. The correlations between $Y_k$ and its direct neighbours $Y_{\ell}$ decrease to zero when $|k-\ell|\to \infty$, but they remain significantly different from zero for quite high values of $|k-\ell|$.

In Figure \ref{Fig1}, we show two Graphs of the ruin probability $\prob{T<\infty, N(T)<s}$ as a function of $s$, for different values of $u$ and $c$ and when $\lambda = 1$, $\mu = 1$, $m=5$, $r=0.7$, $p=0.8$ and $a = 0.6$. Obviously, the ruin probabilities increase with $s$, and we see that in each case $\prob{T<\infty, N(T)<s}$ converges quite quickly as $s$ increases. Taking $s$ between 100 and 200 already yields a result very close to the ultimate ruin probability $\prob{T<\infty}$.
The first graph in Figure \ref{Fig2} shows the same ruin probability but this time as a function of the initial capital $u$ and for different values of $c$, when $s=500$ and the other parameter values are as before.
The second graph in Figure \ref{Fig2} shows the graph of $\prob{T<\infty, N(T)<s}$ as a function of $c$ when $u=0$ and the other parameter values are as before; together with the lower and upper bounds obtained from Proposition \ref{th:boundsruinfinal}. As $s$ is large, we have that $\prob{T<\infty, N(T)<s}$ is approximately equal to the ultimate ruin probability. In this example, we see that the upper bounds is always equal to one and therefore useless. The lower bound, however, starts to decrease as the same time the true ruin probability does, and allows us to say that the ultimate ruin is almost sure for $c$ between 0 and 1.2 (and therefore computing the ruin probability for these values using the exact formula \eqref{partTN} is not needed).
In Figure \ref{Fig3}, we compare the ruin probability $\prob{T<\infty, N(T)<s}$ obtained for two different models: the one with dependent claims as before, and a model where the claims have the same distribution (i.e. $Y_k \sim \mathrm{PH}(\bm{\gamma}_k,A))$ but are independent (see Example 1 in Section \ref{BBB_mult_PH}). Here, $s=500$, $\lambda = 1$, $\mu = 1$, $m=5$, $r=0.7$, $p=0.8$, $a = 0.6$ and $c=1.5$ (left graph) or $c=1.25$ (right graph). It is generally accepted in the literature that more dependence between claims means a higher risk of ruin. This is what we observe here: the model with dependent claims yields higher ruin probabilities when $u$ is large enough, and the ruin probabilities associated to the dependent model remain significantly positive for much larger values of $u$ than the corresponding model with independent claims.
 
\begin{table}[h!!]
\begin{center}
\begin{tabular}{|c|c|c|c|c|c|c|c|c|}
\hhline{~--------}
\multicolumn{1}{c|}{} &  $k=1$ &  $k=2$ & $k=3$ & $k=4$ &  $k=5$ &  $k=6$ & $k=7$ & $k=8$  \\
\hline
$\esp{Y_k}$ & 2.20 & 2.52 & 2.55 & 2.48 & 2.39 & 2.28 & 2.18 & 2.09  \\
 \hline
$\var{Y_k}$ & 5.56  & 6.29  & 6.34 & 6.22 & 6.01 & 5.77 & 5.51 & 5.25  \\
 \hline
\end{tabular}
\end{center}

\vspace{-1em}

\caption{Mean and variance of the first claim sizes in Example 2, when $\mu = 1$, $m=5$, $r=0.7$, $p=0.8$ and $a = 0.6$.}
\label{Table1}
\end{table}

\begin{table}[h!!]
\begin{center}
\begin{tabular}{c|cccccccc}
$Corr(Y_k,Y_{\ell})$ & $\ell=1$ &  $\ell=2$ & $\ell=3$ & $\ell=4$ &  $\ell=5$ &  $\ell=6$ &  $\ell=7$ &  $\ell=8$  \\
 \hline
$k=1$ & 1 & 0.34  & 0.23  & 0.16 &  0.12 &  0.09 & 0.07  & 0.05  \\
$k=2$ & & 1  & 0.40  & 0.28 & 0.21 &  0.15 & 0.12 & 0.09 \\
$k=3$ & & & 1  & 0.42 &  0.31 &  0.23 & 0.18 & 0.14 \\
$k=4$ & & & & 1 &  0.44 &  0.33 & 0.25 & 0.19 \\
$k=5$ & & & & & 1 &  0.45 & 0.34 & 0.26 \\
\end{tabular}
\end{center}

\vspace{-1em}

\caption{Correlation between the first claim sizes in Example 2, when $\mu = 1$, $m=5$, $r=0.7$, $p=0.8$ and $a = 0.6$.}
\label{Table2}
\end{table}

\begin{figure}[h!!]
\begin{center}
\begin{tabular}{c c} 
\includegraphics[scale=0.55]{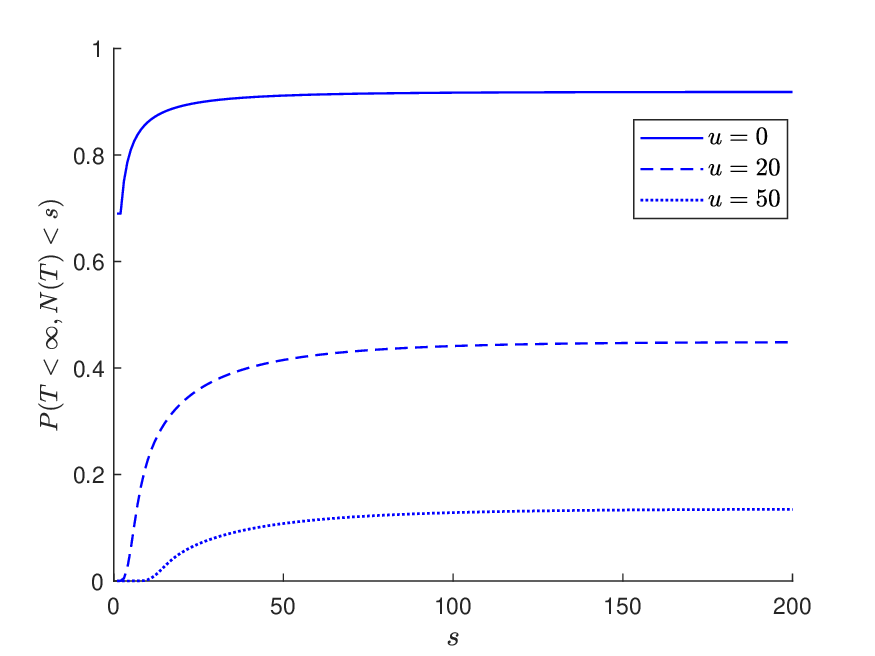} & \hspace{-1.5em} \includegraphics[scale=0.55]{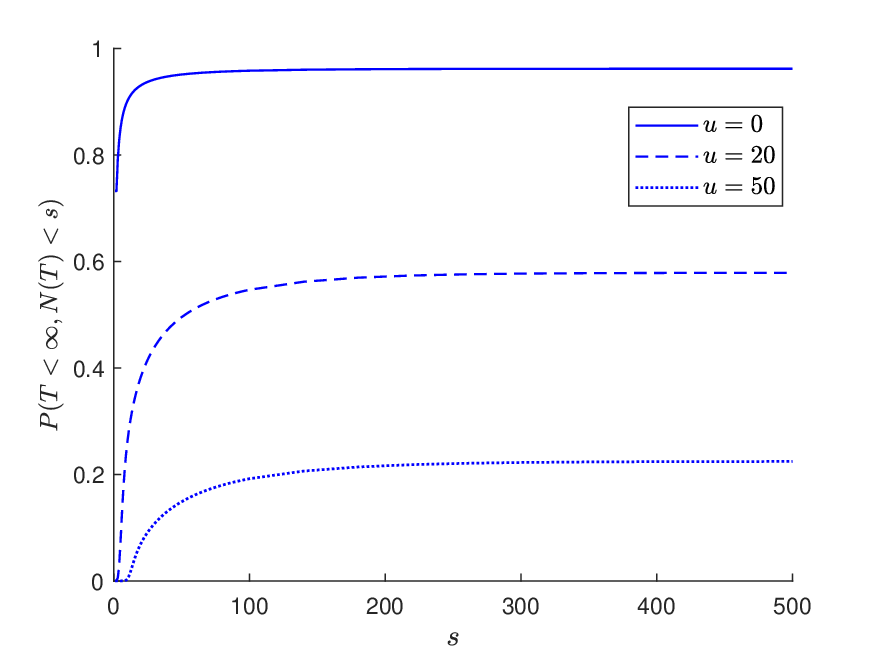}  
\end{tabular}
\end{center}
\vspace{-1em}

\caption{Graphs of the ruin probability $\prob{T<\infty, N(T)<s}$ as a function of $s$ in Example 2, for different values of $u$ when $\lambda = 1$, $\mu = 1$, $m=5$, $r=0.7$, $p=0.8$, $a = 0.6$ and $c=1.5$ (left) or $c=1.25$ (right).} 
\label{Fig1}
\end{figure}

\begin{figure}[h!!]
\begin{center}
\begin{tabular}{c c} 
\includegraphics[scale=0.55]{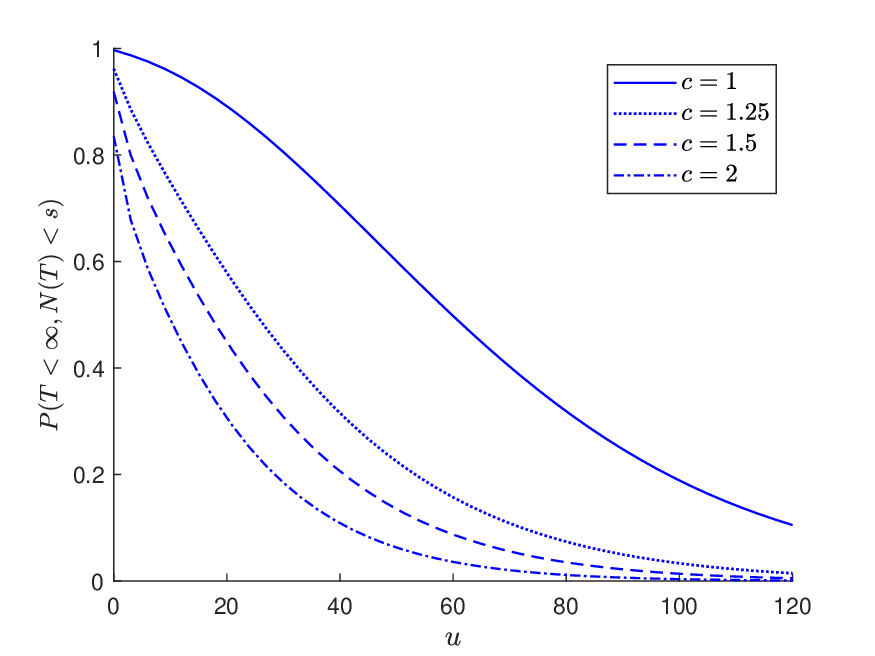} & \hspace{-1.5em} \includegraphics[scale=0.55]{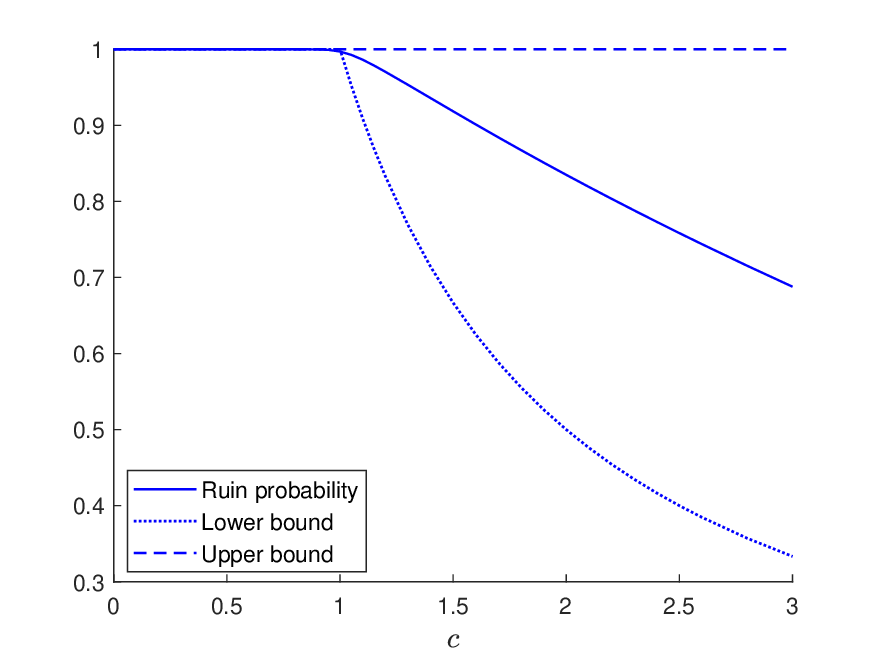}  
\end{tabular}
\end{center}
\vspace{-1em}

\caption{Left: Graphs of the ruin probability $\prob{T<\infty, N(T)<s}$ as a function of the initial capital $u$ in Example 2, for different values of $c$. Right: Comparison between $\prob{T<\infty, N(T)<s}$ and the upper and lower bounds for the ultimate ruin probability when $u=0$. Here $\lambda = 1$, $\mu = 1$, $m=5$, $r=0.7$, $p=0.8$, $a = 0.6$ and $s=500$.} 
\label{Fig2}
\end{figure}

\begin{figure}[h!!]
\begin{center}
\begin{tabular}{c c} 
\includegraphics[scale=0.55]{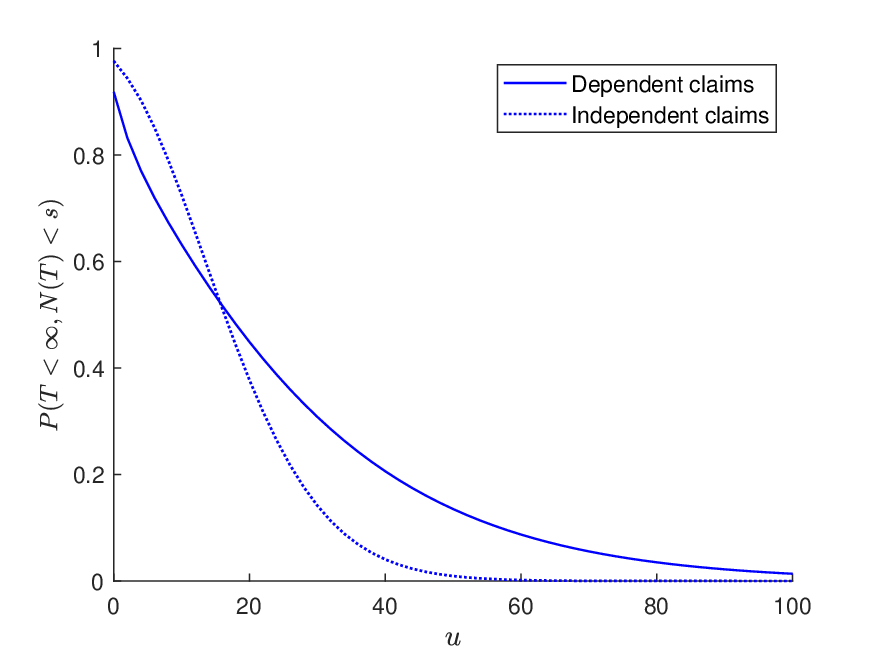} & \hspace{-1.5em} \includegraphics[scale=0.55]{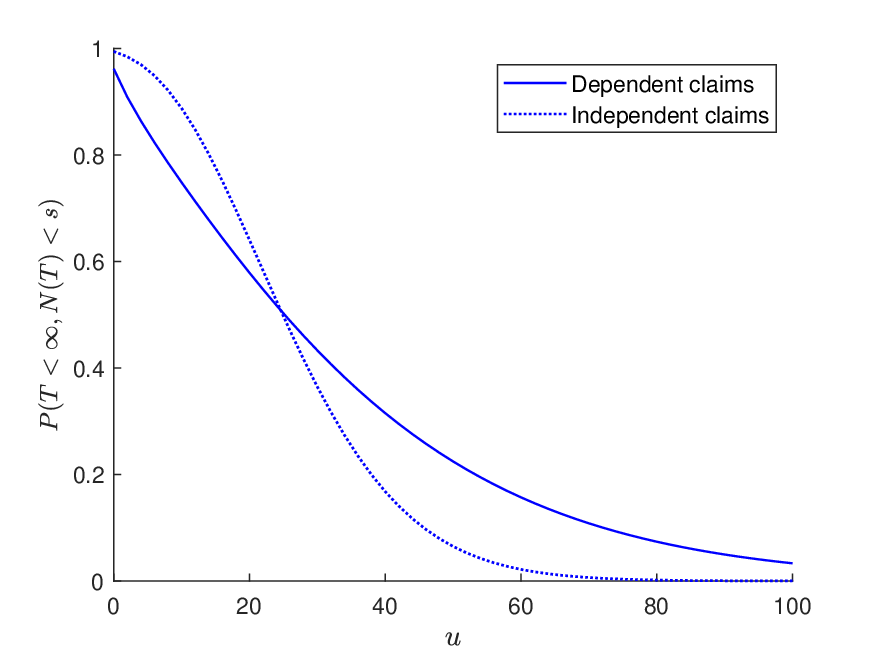}  
\end{tabular}
\end{center}
\vspace{-1em}

\caption{Graphs of the ruin probability $\prob{T<\infty, N(T)<s}$ as a function of $u$ in Example 2, for the model with dependent claims as before and a model where the claims have the same distribution but are independent, and when $s=500$, $\lambda = 1$, $\mu = 1$, $m=5$, $r=0.7$, $p=0.8$, $a = 0.6$ and $c=1.5$ (left) or $c=1.25$ (right).}
\label{Fig3}
\end{figure}

\vspace{1em}

\noindent {\bf Example 3 (continued).} Let us now consider the risk process \eqref{risk_proc_MPH} in which the claims are distributed as in Example 3 of Section \ref{BBB_mult_PH}. 
In the setting of risk processes, it can represent situations where the severity of the first claims (in the sense of the number of exponential stages they go through) has an impact on the severity of the claims to come.

For a fixed integer $m \ge 2$, we choose $P$ as the transition matrix 
$$
P = \begin{bmatrix}
\frac{1}{m-1} & \frac{1}{m-1} & \frac{1}{m-1} & \cdots & \frac{1}{m-1} \\
0 & \frac{1}{m-2} & \frac{1}{m-2} & \cdots & \frac{1}{m-2} \\
0 & 0 & \frac{1}{m-3} & \cdots & \frac{1}{m-3} \\
\vdots & \vdots & \vdots & \ddots & \vdots \\
0 & 0 & 0 & \cdots & 1 \\
\end{bmatrix},
$$
and $\bm{\beta_k} = [1~0~\cdots~0]$ for all $k$. The rates $\mu_k$ and the probabilities $p_k$ will be either constant or given by 
\begin{equation} \label{pk_muk}
\mu_k = 1 + \frac{k}{k+1}, ~~~~ p_k = 0.9 + \frac{k}{20(k+1)},
\end{equation}
so that the mean duration $1/\mu_k$ of each exponential stage decreases over time (from 1 to 0.5) while the probability of going through one more stage slightly increases (from 0.9 to 0.95).

Table \ref{Table3} shows the means, variances and some correlations of $(Y_1,...,Y_8)$ when $m=10$. For larger values of $k$, the expectation and variance of $Y_k$ decrease slowly to stay around 3.35 and 4.6, respectively. 
The correlations between $Y_k$ and $Y_{\ell}$ is significant when $\ell = k+1$ only, but Corr$(Y_{k},Y_{k+1})$ remains around 0.14 as $k$ goes to infinity.

In Figure \ref{Fig4}, we show the ruin probability $\prob{T<\infty, N(T)<s}$ as a function of the initial capital $u$ when $m=10$, $\lambda = 1$ and $p_k$, $\mu_k$ are given in \eqref{pk_muk} and for different values of $s$ and $c$.
In Figure \ref{Fig5}, we change some parameter values: we still take $m=10$ and $\lambda = 1$ but $p_k = 0.95$ and $\mu_k = 2$ are now constant. In that way, the claims are dependent but identically distributed and the model is an example of the particular case discussed at the end of Section \ref{BBB_tf}. The graphs show the ultimate ruin probability as a function of $c$ or $u$. Observe that this probability is equal to one for $c$ lower than 4, whatever the initial capital. This could have been obtained directly by using that $\widehat{\Psi}_{0}$ given in \eqref{eq_PSIhat} is stochastic in these cases.

\begin{table}[h!!]
\begin{center}
\begin{tabular}{|c|c|c|c|c|c|c|c|c|}
\hhline{~--------}
\multicolumn{1}{c|}{} &  $k=1$ &  $k=2$ & $k=3$ & $k=4$ &  $k=5$ &  $k=6$ & $k=7$ & $k=8$  \\
\hline
$\esp{Y_k}$ & 4.81 & 3.34 & 3.57 & 3.46 & 3.46 & 3.44 & 3.43 & 3.42  \\
 \hline
$\var{Y_k}$ & 8.05  & 5.90  & 5.91 & 5.55 & 5.38 & 5.26 & 5.17 & 5.10  \\
 \hline
 $Corr(Y_k,Y_{k+1})$ & 0.23  & 0.10 & 0.12 & 0.12 & 0.13 & 0.13 & 0.13 & 0.13 \\
 \hline
\end{tabular}
\end{center}

\vspace{-1em}

\caption{Mean, variance and correlations of the first claim sizes in Example 3 when $m=10$ and $p_k$, $\mu_k$ are given in \eqref{pk_muk}.}
\label{Table3}
\end{table}

\begin{figure}[h!!]
\begin{center}
\begin{tabular}{c c} 
\includegraphics[scale=0.55]{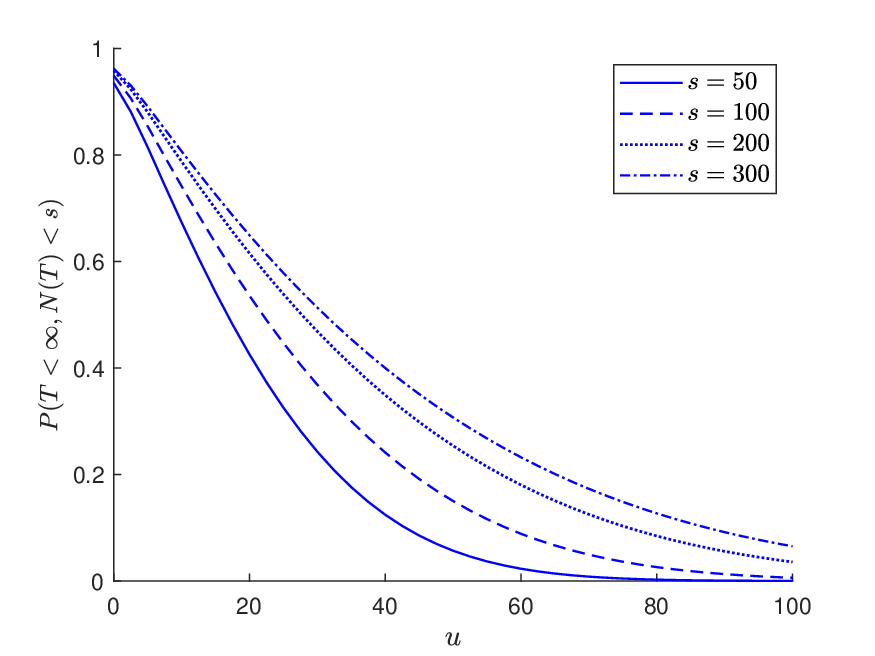} & \hspace{-1.5em} \includegraphics[scale=0.55]{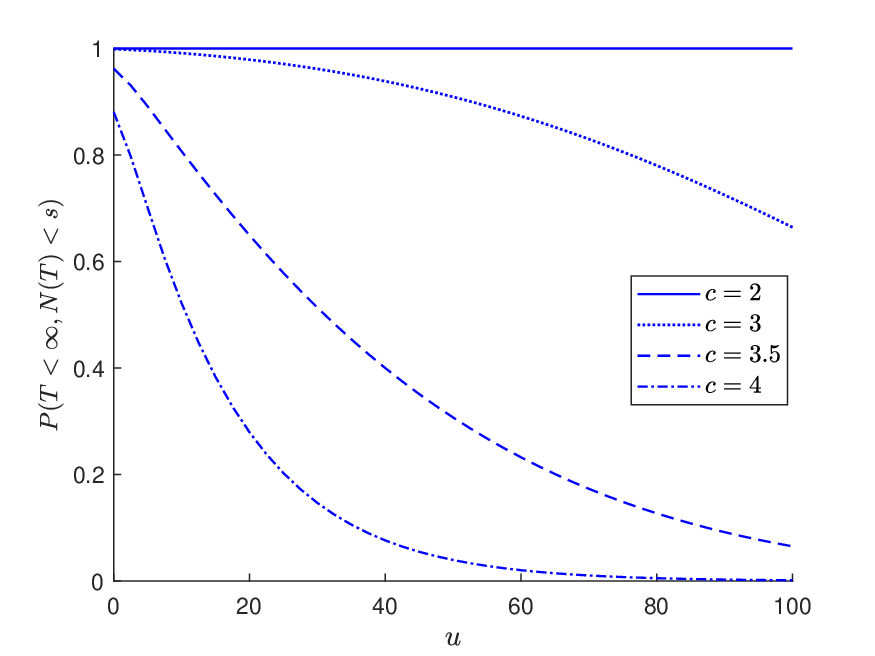}  
\end{tabular}
\end{center}
\vspace{-1em}

\caption{Graphs of the ruin probability $\prob{T<\infty, N(T)<s}$ as a function of the initial capital $u$ in Example 3, for different values of $s$ and $c$ when $m=10$, $\lambda = 1$ and $p_k$, $\mu_k$ are given in \eqref{pk_muk}.  Left graph: $c=3.5$. Right graph: $s=300$.} 
\label{Fig4}
\end{figure}

\begin{figure}[h!!]
\begin{center}
\begin{tabular}{c c} 
\includegraphics[scale=0.55]{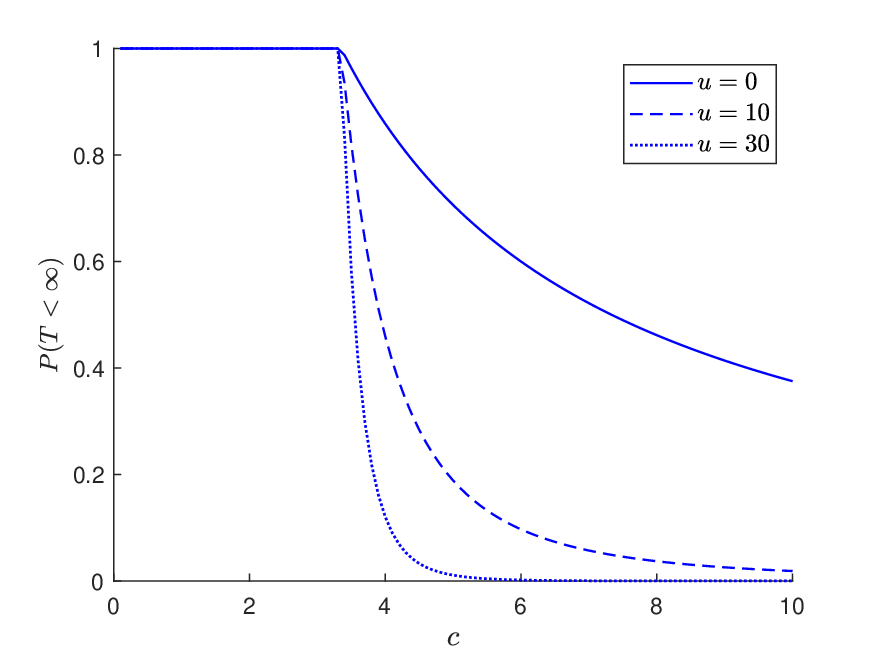} & \hspace{-1.5em} \includegraphics[scale=0.55]{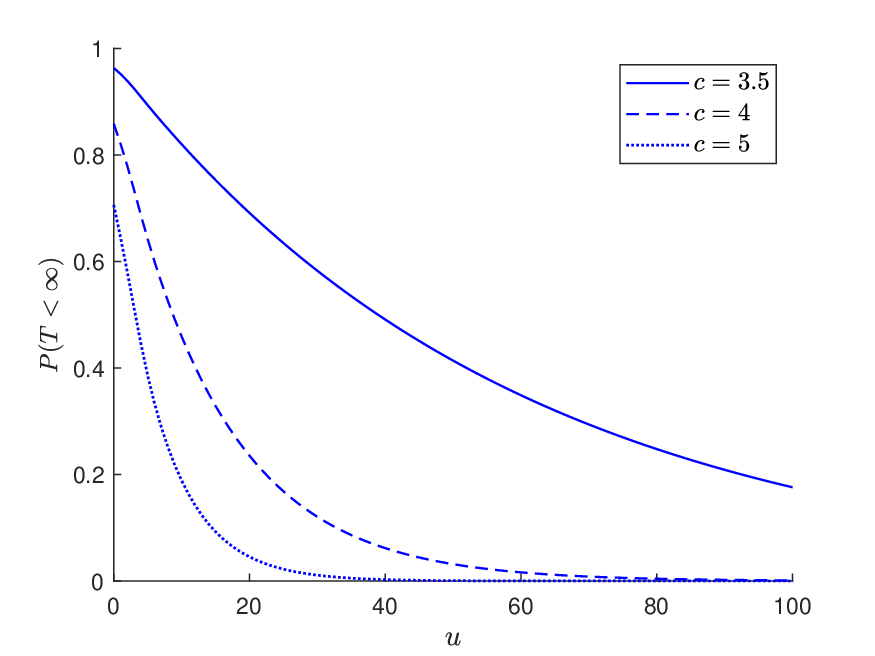}  
\end{tabular}
\end{center}
\vspace{-1em}

\caption{Graphs of the ultimate ruin probability $\prob{T<\infty}$ as a function of the premium $c$ (left) or of the initial reserves $u$ (right) in Example 3, when $m=10$, $\lambda = 1$ and $p_k = 0.95$, $\mu_k = 2$ are constant.} 
\label{Fig5}
\end{figure}
\section*{Acknowledgements}
The authors acknowledge the support of the Australian Research Council Center of Excellence for Mathematical and Statistical Frontiers (ACEMS). Oscar Peralta was additionally supported by the Australian Research Council DP180103106 grant and the Swiss National Science Foundation Project $200021\_191984$.
\section*{Data availability}
Data sharing not applicable to this article as no datasets were generated or analysed during the current study.
\bibliographystyle{abbrv}
\bibliography{Ruin problems with dependent PH claims_revision.bbl} 
\end{document}